\DeclareMathOperator{\tr}{tr}
\setlist[enumerate]{leftmargin=.5in}
\setlist[itemize]{leftmargin=.5in}
\crefname{hypothesis}{Hypothesis}{Hypotheses}
\title{Deep Linear Networks for Matrix Completion -- An Infinite Depth Limit\thanks{Submitted to the editors DATE.}}
\author{Nadav Cohen\thanks{Blavatnik School of Computer Science, Tel Aviv University, Israel 
  (\email{cohennadav@tauex.tau.ac.il}).}
\and Govind Menon\thanks{Division of Applied Mathematics, Brown University, Providence, RI 
  (\email{govind\_menon@brown.edu}, \email{zsolt\_veraszto@brown.edu}).}
\and Zsolt Veraszto\footnotemark[3]}
\begin{document}

\maketitle

\begin{abstract}
The deep linear network (DLN) is a model for implicit regularization in gradient based optimization of overparametrized learning architectures. Training the DLN corresponds to a Riemannian gradient flow, where the Riemannian metric is defined by the architecture of the network and the loss function is defined by the learning task.  We extend this geometric framework, obtaining  explicit expressions for the volume form, including the case when the network has infinite depth. We investigate the link between the Riemannian geometry and the training asymptotics for matrix completion with rigorous analysis and numerics. We propose that under small initialization, implicit regularization is a result of bias towards high state space volume.
\end{abstract}

\begin{keywords}
generalizability, implicit regularization, Riemannian gradient flow, Deep Linear Network, matrix completion.
\end{keywords}

\begin{MSCcodes}
68T07, 
58D17, 
37N40 
\end{MSCcodes}

\section{Introduction}
\subsection{The deep linear network}
Deep learning has proven its general applicability in several fields of applied science in the past decade \cite{LeCun2015}. While neural networks are structurally simple function approximators (e.g.\ \cite{Lapedes1988}), several questions around them remain unanswered. Two fundamental problems are to obtain first principles explanations for generalizability and implicit regularization. Generalizability refers to a network's performance on new, previously unseen data. Implicit regularization is the feature of deep networks to avoid overfitting despite overparametrization. In the context of classical regression problems, overfitting is mitigated by explicit regularization. Deep learning architectures are observed not to overfit despite the lack of explicit regularization. This is referred to as implicit regularization.



A simple model in which the effect of overparametrization in deep networks can be studied is the deep linear network (DLN)~\cite{Arora2018,arora2019implicit}. The DLN is simple enough to serve as a minimal model for neural networks. It may also be applied directly to optimization problems, such as matrix completion\cite{candes2008exact}, autoencoders \cite{Bah2019} and multi-task training. The most common approach to the training process is gradient based minimization of a loss function, a process known as empirical risk minimization. 

Recent work has shown that the DLN is also of intrinsic mathematical interest. Training the DLN corresponds to a gradient flow of a loss function with a Riemannian structure determined by the architecture of the network. The purpose of this paper is to further develop the mathematical theory of DLN. The main new contributions are explicit formulas for volume forms, including the infinite depth limit, that parallel corresponding expressions in random matrix theory. A numerical investigation that interprets some aspects of these formulas is also presented. We define the DLN below and then state these results more precisely.


\subsection{The dynamical systems}
Let $\mathbb{M}_d$ denote the space of real $d\times d$ matrices. The state space of the
DLN consists of $N$ weight matrices, $W_i\in \mathbb{M}_d$, $1 \leq i \leq N$, which we denote by
\begin{equation}
\label{eq:def-bigW}
    \mathbf{W}=\left(W_N, W_{N-1},\dots, W_1\right).
\end{equation}
The number of weight matrices, $N$, is referred to as the {\em depth\/} of the network. We have assumed that all matrices have the same size for simplicity. In fact, all that is required is that the matrix sizes be such that the product $W$ in equation~\eqref{eq:endend} is well-defined. Properties of the DLN in such generality have been studied in~\cite{arora2019implicit,Bah2019}. We build on these results in our work, but restrict ourselves to $W_i\in \mathbb{M}_d$ for ease of computation. Some aspects of our analysis extend to a manifold of fixed rank matrices as in~\cite{Bah2019} (see Section \ref{sec:rankoneexample}).

The training process is an optimization problem for the weight matrices based on a lower dimensional observable. This observable, referred to as the {\em end-to-end matrix\/}, is the product of the weight matrices
\begin{equation}
\label{eq:endend}
    W=\pi\left(\mathbf{W}\right):=\prod_{i=N}^1 W_i.
\end{equation}
Notice that the dimension of $W$ is $d^2$ regardless of the network depth. 

The gradient flow for a {\em loss function\/} $E(W)$
\begin{equation}
\label{eq:gradflow1}
    \frac{d}{dt} W_j=-\partial_{W_j} E\left( W\right), \quad 1 \leq j \leq N,
\end{equation}
is used as our training model. 
Equation~\eqref{eq:gradflow1} is an idealization for the manner in which training is implemented in practice. Since $W$ is defined through equation~\eqref{eq:endend}, we also find that
\begin{equation}
    \dot W_j=-W_{j+1}^T \dots W_{N}^T \,\partial_W E(W) \,W_{1}^T \dots W_{j-1}^T, \quad j=1,\dots, N.
\end{equation}

Here and below, the notation $\partial_A f(A)$ for a differentiable function $f: \mathbb{M}_d \rightarrow \mathbb{R}$ denotes the Euclidean gradient, that is, for every $B \in \mathbb{M}_{d}$, $df(A)(B)=\tr \left( \partial_A f(A)^T B\right)$. 

Our interest lies in the long-time behavior of the observable $W(t)$. The limit $\lim_{t\to \infty} W(t)$ is called the {\em training outcome\/}. Since the dynamics is governed by a gradient flow, this limit exists when the loss function $E(W)$ is bounded below and has compact sublevel sets. However, natural loss functions, such as those for certain matrix completion problems, do {\em not\/} have compact sublevel sets. In fact, the minima of these loss functions are submanifolds of $\mathbb{M}_d$. Thus, the prediction of training outcomes is a subtle problem. The existence of $\lim_{t\to \infty} W(t)$ when each of the $W_i$ has full rank was established using Lojasiewicz's theorem~\cite[Thm 10]{Bah2019}. In order to use this convergence theorem as a foundation for the analysis in this paper, we restrict ourselves to $W_i \in GL(d)$ for most of the analysis. The space $GL(d)$ is the set of bijective linear transformations of $\mathbb{R}^d$, which is isomorphic to the set of $d$-by-$d$ invertible matrices. Despite this assumption, training outcomes for many matrix completion problems have low rank. The repeated appearance of low rank matrices as training outcomes has stimulated the use of several heuristics for the prediction of training outcomes~(see the discussion in~\cite{maynotbenorms}). The goal of this paper is to shed light on this question using the Riemannian geometry of the DLN and numerical simulations.


Let us now review the Riemannian geometry underlying the Euclidean gradient flow defined in  equation~\eqref{eq:gradflow1}. An important concept, see~\cite[Defn.1]{Arora2018}, is the notion of 
{\em balancedness\/}. We say that the weight matrices $W_j$ and $W_{j+1}$ are $G_j$-balanced if
\begin{equation}
    G_j:=W_{j+1}^T W_{j+1}-W_j W_j^T.
\label{gbalancedness}
\end{equation}
For fixed $G_j$, $1\leq j \leq N$, equation~\eqref{gbalancedness} defines an algebraic variety $\mathbb{M}_d^N$ that is stratified by the rank. When $G_j=0$, for each value of the rank $r$, the gradient flow~\eqref{eq:gradflow1} leaves the manifold $\mathcal{M}_r$ of rank-$r$ matrices solving~\eqref{gbalancedness} invariant~\cite[Cor.6]{Bah2019}. These are called the {\em balanced manifolds\/}. It is immediate from equation~\eqref{gbalancedness} that the singular values of the weight matrices $\{W_j\}_{j=1}^N$ are equal on the balanced manifolds. 

In order to use existing convergence theory, we restrict attention to full-rank matrices, and we denote $\mathcal{M}_d$ by $\mathcal{M}$ in what follows.
This balanced manifold allows to separate the dynamics into a flow `upstairs' in $\mathbb{M}_d^{N}$, described by equation~\eqref{eq:gradflow1}, and a flow `downstairs' for the end-to-end matrix $W(t)$ in $GL(d)$. The flow downstairs is a Riemannian gradient flow with a metric computed in~\cite{Bah2019} that may be described as follows.
We define the linear map $\mathcal{A}_{N,W}: T_W GL(d) \simeq \mathbb{M}_d \rightarrow \mathbb{M}_{d}$ 
\begin{equation}
\label{eq:pushforward1}    
\mathcal{A}_{N,W}(Z) := \frac{1}{N} \sum_{j=1}^N \left(W W^T\right)^\frac{N-j}{N} Z \left(W^T W\right)^\frac{j-1}{N}.
\end{equation}
On the balanced manifold the end-to-end matrix satisfies the Riemannian gradient flow
\begin{equation}
\label{eq:gradient-flow}
    \dot W= -\mathrm{grad}_{g^N} E(W),
\end{equation}
under the metric
\begin{equation}
\label{metric}
    g^N(Z_1,Z_2)=\tr\left(\mathcal{A}_{N,W}^{-1}\left(Z_1\right)^T Z_2\right),
\end{equation}
where $Z_1,Z_2 \in T_W GL(d)$. This structure allows us to extend the DLN geometry to the infinite depth limit, with the linear operator
\begin{equation}
    \mathcal{A}_{\infty,W}\left(Z\right)=\lim_{N\rightarrow \infty} \mathcal{A}_{N,W}(Z) = \int_0^1\left(W W^T\right)^{\left(1-\tau\right)} Z \left(W^T W\right)^\tau d\tau,
    \label{A_N}
\end{equation}
replacing $\mathcal{A}_N$ in equation~\eqref{metric} to define a limiting metric $g^\infty$. 
When $N$ is finite, the dynamical system~\eqref{eq:gradient-flow} corresponds to a flow upstairs in $\mathbb{M}_d^N$. In the limit of infinite depth, equation~\eqref{eq:gradient-flow} continues to hold, even though there is no longer a well-defined flow upstairs.

The existence of the infinite depth metric, in particular its resemblance to the Bogoliubov inner product in quantum statistical mechanics, was noted in~\cite[Remark 8]{Bah2019}. We develop some properties of this metric below, emphasizing explicit formulas in SVD coordinates in Section~\ref{subsec:metric-volume}. These formulas also show that the metric is at least $C^1$ in the open subset of $\mathbb{M}_d$ of matrices with distinct singular values. These formulas are better seen as first steps towards deeper exploration. We still lack an understanding of the curvature and geodesics of these metric though some partial results have been obtained in~\cite{Zsolt-thesis}. The appearance of such elegant Riemannian structures in the DLN are surprising at first sight. However, it is helpful to note that fundamental interior point methods for conic programs also have a surprising gradient structure (see~\cite{Bayer1989I,Bayer1989II,hildebrand}). In recent work~\cite{MY-conic}, one of the authors and Yu have studied these metrics, seeking precise comparisons between gradient flows underlying conic programs and deep learning.

The above structure applies to an arbitrary loss function. In practice, the loss function is determined by the learning task and the ease of computation. We focus on matrix completion in this paper, choosing the loss function $E(W)$ to be the quadratic distance from a fixed matrix $\Phi$ termed the {\em optimization objective\/} \cite{Gunasekar2017}. Using $\circ$ for element-wise (Hadamard) product, the family of loss functions we study takes the form
\begin{equation}
    E_\mathcal{B}(W)=\frac{1}{2}\|\mathcal{B}\circ\left(\Phi-W\right)\|_2^2.
    \label{loss}
\end{equation}

Here $\mathcal{B}$ is a matrix whose entries are either zero or one. This notation allows us to include several forms of matrix completion. An important example is the following: Let $\mathcal{B}=I$ select the diagonal elements, and consider
\begin{equation}
    E_I(W)=\frac{1}{2}\|\text{diag}\left(\Phi-W\right)\|_2^2
    \label{eq:diagonalenergy}
\end{equation}
Here $\text{diag}(\cdot)$ returns a diagonal matrix constructed from the diagonal elements of its arguments. 

The nature of the loss function is determined by the matrix $\mathcal{B}$. For example, the energy defined by~\eqref{eq:diagonalenergy} has a submanifold of global minima. Indeed, given a diagonal matrix $\Phi$, $E_I(W)$ vanishes when $W$ is chosen to be any matrix whose diagonal entries are $\Phi$. Moreover, since the matrix can be completed to any rank from $1$ to $d$, some of these minima are noninvertible matrices. We consider numerical examples with several such $\mathcal{B}$.

\subsection{Statement of Results}
\subsubsection{The metric and volume forms}
\label{subsec:metric-volume}
The singular value decomposition (SVD) of $W$ is denoted $W=U\Sigma V^T$, where $U,V \in O(d)$ and $\Sigma$ denotes the diagonal matrix of singular values. We write $\Sigma_{ii}=\sigma_i$ and order the singular values in decreasing order: $\sigma_i\geq\sigma_j$ if $i<j$. 
The metric $g^N$ may be expressed in a simple manner using the SVD. We find (see equation~\eqref{metricmatrix} below) that 
\begin{equation}
\label{eq:metricmatrix-a}
g^N = (V\otimes U) D^N(\Sigma) (V\otimes U)^T. 
\end{equation}
Here $V\otimes U$ is the Kronecker product of $V$ and $U$ and $D_N \in \mathbb{R}^{d^2 \times d^2} $ is a diagonal operator with nonzero entries 
\begin{equation}
\label{eq:metricmatrix-c}
D^N_{il} = \frac{N}{\sum_{j=1}^N (\sigma_i^2)^{N-j/N} (\sigma_l^2)^{j/N}}, \quad 1\leq i,l \leq d.
\end{equation}
Here the subscript $il$ denotes a double index on the diagonal elements of $D^N$. The expression is unambiguous due to the symmetry of \eqref{eq:metricmatrix-c} in $i$ and $l$. The expression~\eqref{eq:metricmatrix-a} holds in the limit $N=\infty$, with
\begin{equation}
\label{eq:metricmatrix-b}
D^\infty_{il} =  \frac{2 \log (\sigma_i/\sigma_l)}{\sigma_i^2 -\sigma_l^2} \quad i\neq l, \quad D_{ii} = \frac{1}{\sigma_i^2}.
\end{equation}

These expressions for the metric are used to compute the associated volume forms in $GL(d)$. 
Let $\mathrm{van}(\Lambda)$ denote the Vandermonde determinant of a diagonal matrix $\Lambda$, let $d\Sigma$ denote Lebesgue measure on $\mathbb{R}^N$ and let $dU$ and $dV$ denote Haar measure on $O(d)$.
\begin{theorem}
\label{thm:volumeforms}
    The volume form of $g^N$ is given by
    \begin{align}
      \sqrt{\det g^N}dW=N^\frac{d(d-1)}{2}\,\det(\Sigma^2)^{\frac{1-N}{2N}}\,\mathrm{van}\left(\Sigma^{2/N}\right) \,d\Sigma dU dV.
    \end{align}
In the limit $N\to \infty$, the volume form of $g^\infty$ is
    \begin{align}
      \sqrt{\det g^\infty}dW=\frac{\mathrm{van}\left(\log \Sigma^2\right)}{\sqrt{\det\left(\Sigma^2\right)} } \,d\Sigma dU dV.
      \label{eq:volumeforminf}
    \end{align}
 \end{theorem}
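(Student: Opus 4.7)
The plan is to work entirely in the SVD-aligned basis of rank-one matrices, diagonalize $\mathcal{A}_{N,W}$ there, and combine the resulting expression for $\det g^N$ with the standard SVD Jacobian of $dW$.

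First, I would verify formulas \eqref{eq:metricmatrix-a}--\eqref{eq:metricmatrix-b}. Let $u_1,\dots,u_d$ and $v_1,\dots,v_d$ denote the columns of $U$ and $V$. The family $\{u_i v_l^T\}_{1\le i,l\le d}$ is an orthonormal basis of $T_W GL(d)\simeq \mathbb{M}_d$ with respect to the Frobenius inner product, and in this basis the Kronecker factor $V\otimes U$ is exactly the change of basis to the standard coordinates on $\mathbb{M}_d$. Using $WW^T=U\Sigma^2U^T$ and $W^TW=V\Sigma^2V^T$, direct substitution into \eqref{eq:pushforward1} shows that each $u_iv_l^T$ is an eigenvector of $\mathcal{A}_{N,W}$ with eigenvalue $\lambda^N_{il}=N^{-1}\sum_{j=1}^N(\sigma_i^2)^{(N-j)/N}(\sigma_l^2)^{(j-1)/N}$. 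Summing the geometric series gives $\lambda^N_{il}=(\sigma_l^2-\sigma_i^2)/\bigl(N(\sigma_l^{2/N}-\sigma_i^{2/N})\bigr)$ for $i\neq l$ and $\lambda^N_{ii}=\sigma_i^{2(N-1)/N}$. The corresponding calculation in the infinite-depth case uses $\int_0^1(\sigma_i^2)^{1-\tau}(\sigma_l^2)^\tau\,d\tau=(\sigma_l^2-\sigma_i^2)/(2\log(\sigma_l/\sigma_i))$, with $\lambda^\infty_{ii}=\sigma_i^2$. Taking $D^N_{il}=1/\lambda^N_{il}$ reproduces \eqref{eq:metricmatrix-c} and \eqref{eq:metricmatrix-b}.

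Second, I would derive the SVD Jacobian of $dW$. Writing $W=U\Sigma V^T$ and setting $A=U^T\,dU$, $B=V^T\,dV$ (both antisymmetric), one has $U^T\,dW\,V=A\Sigma+d\Sigma-\Sigma B$. The diagonal contributes $d\sigma_1\cdots d\sigma_d$, while for each pair $i<l$ the off-diagonal block maps $(A_{il},B_{il})$ to the $(i,l)$ and $(l,i)$ entries through the $2\times 2$ matrix $\bigl(\begin{smallmatrix}\sigma_l & -\sigma_i\\ -\sigma_i & \sigma_l\end{smallmatrix}\bigr)$, whose determinant has absolute value $\sigma_i^2-\sigma_l^2$. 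Hence
\begin{equation*}
dW=\prod_{i<l}(\sigma_i^2-\sigma_l^2)\,d\Sigma\,dU\,dV,
\end{equation*}
with $dU$ and $dV$ the Haar measures on $O(d)$ induced by the antisymmetric variables $A$ and $B$.

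Third, since $\{u_iv_l^T\}$ is Frobenius-orthonormal, $\sqrt{\det g^N}$ computed in the standard Euclidean frame equals $\prod_{i,l}\sqrt{D^N_{il}}$. Splitting the product into diagonal and off-diagonal parts gives $\prod_i D^N_{ii}=\det(\Sigma^2)^{(1-N)/N}$ and $\prod_{i\neq l}D^N_{il}=N^{d(d-1)}\prod_{i<l}\bigl((\sigma_l^{2/N}-\sigma_i^{2/N})/(\sigma_l^2-\sigma_i^2)\bigr)^2$. Multiplying $\sqrt{\det g^N}$ by the SVD Jacobian cancels the denominator $\prod_{i<l}(\sigma_l^2-\sigma_i^2)$ and leaves precisely $N^{d(d-1)/2}\det(\Sigma^2)^{(1-N)/(2N)}\,\mathrm{van}(\Sigma^{2/N})$. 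The infinite-depth case is analogous: $\prod_i D^\infty_{ii}=1/\det(\Sigma^2)$ and the off-diagonal numerators $2\log(\sigma_i/\sigma_l)=\log\sigma_i^2-\log\sigma_l^2$ combine with the cancellation to produce $\mathrm{van}(\log\Sigma^2)/\sqrt{\det(\Sigma^2)}$.

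The main obstacle is bookkeeping rather than analysis: one must identify the eigenbasis of $\mathcal{A}_{N,W}$ carefully enough that the change to the Frobenius-orthonormal frame $\{u_iv_l^T\}$ introduces no spurious factor, and then keep track of how the Vandermonde $\prod_{i<l}(\sigma_i^2-\sigma_l^2)$ coming from the SVD Jacobian absorbs the denominators appearing in $\sqrt{D^N_{il}}$. The passage $N\to\infty$ is then a routine limit, since the eigenvalues $\lambda^N_{il}$ converge pointwise to $\lambda^\infty_{il}$ on the open set of matrices with distinct nonzero singular values, where $g^\infty$ is $C^1$.
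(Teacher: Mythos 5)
Your proposal is correct and follows essentially the same route as the paper: it diagonalizes $\mathcal{A}_{N,W}$ on the rank-one basis $u_iv_l^T$ (the paper's Lemma on the spectral decomposition), computes $\sqrt{\det g^N}$ as the product of the reciprocal square roots of the eigenvalues, and converts $dW$ to $d\Sigma\,dU\,dV$ via the SVD Jacobian $\mathrm{van}(\Sigma^2)$, exactly as in the paper's proof. Your $2\times 2$-block derivation of the Jacobian is just a compact rewriting of the paper's block-matrix computation, so there is nothing to add.
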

The volume forms allow us to quantify the importance of regions of high volume that correspond to empirical observations of training outcomes. Notice that the volume density blows up when passing to the limit $\sigma_i \rightarrow 0$, showing a clear relationship between low rank and high volume. 

The reader unfamiliar with Riemannian geometry should note that all our work reduces to explicit calculations in SVD coordinates. The symmetries of the metric implicit in equation~\eqref{eq:metricmatrix-b} allow us to calculate several Jacobian determinants explicitly. The main subtlety in using SVD coordinates is that naive calculations must be restricted to the open set of $\mathbb{M}_d$ where $W$ has distinct singular values, and the branches of the SVD must be resolved on the lower-dimensional varieties corresponding to repeated eigenvalues. Formulas, such as those in Theorem~\ref{thm:volumeforms}, are established under the assumption that the singular values are distinct, and then seen to hold in the limit of repeated singular values using continuity. 

Such calculations follow the spirit of random matrix theory~\cite{Govind_RMT}. For example, Theorem~\ref{thm:volumeforms} suggests interesting asymptotics for the DLN in the limits $N\to\infty$ and $d\to \infty$. We do not consider this question in this paper, but see~\cite{hanin} for a similar investigation.

\subsubsection{Normal hyperbolicity}

The spectral decomposition of $g^\infty$ given in equation~\eqref{eq:metricmatrix-a} (with $N=\infty)$ and \eqref{eq:metricmatrix-b} has important consequences for the dynamics given by equation~\eqref{eq:gradient-flow}. For different choices of $\mathcal{B}$, let $\mathcal{N}_\mathcal{B}$ denote the set of global minima of the energy function~\eqref{loss},
\begin{equation}
    \mathcal{N}_\mathcal{B}=\{W: W\in \mathbb{M}_d,\, \mathcal{B}\circ \left(\Phi-W\right)=0\}.
\end{equation}
Recall that $\mathcal{B}$ is a matrix whose entries are either zero or one. Let $K$ denote the number of zeros in $\mathcal{B}$, 
\begin{equation}
    K=d^2-\sum_{i,j=1}^d \mathcal{B}_{ij}.    
\end{equation}
The set $\mathcal{N}_\mathcal{B}$ is an open submanifold of $GL(d)$ with dimension $K$. 
\begin{theorem}
\label{thm:nhim}
Any $K$-dimensional compact submanifold of $\mathcal{N}_\mathcal{B}$ is normally hyperbolic under the dynamical system~\eqref{eq:gradient-flow} for $N=\infty$.
\end{theorem}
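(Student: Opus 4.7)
Since $\mathcal{N}_\mathcal{B}$ consists of critical points of $E_\mathcal{B}$, every $W \in \mathcal{N}_\mathcal{B}$ is a fixed point of the flow~\eqref{eq:gradient-flow}, so normal hyperbolicity of a compact submanifold $\mathcal{K} \subset \mathcal{N}_\mathcal{B}$ reduces to the infinitesimal statement that, at every $W^\ast \in \mathcal{K}$, the linearization $L_{W^\ast}$ of the vector field $X(W) = -\mathrm{grad}_{g^\infty} E_\mathcal{B}(W)$ has kernel exactly $T_{W^\ast}\mathcal{N}_\mathcal{B}$ with the rest of its spectrum bounded strictly away from the imaginary axis, uniformly in $W^\ast \in \mathcal{K}$. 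I will check these properties directly from $\partial_W E_\mathcal{B}(W) = \mathcal{B}\circ(W-\Phi)$ together with the identity $\mathrm{grad}_{g^\infty} E_\mathcal{B}(W) = \mathcal{A}_{\infty,W}\bigl(\mathcal{B}\circ(W-\Phi)\bigr)$ dual to the definition~\eqref{metric} of the metric.

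\paragraph{Linearization.} Differentiating at $W^\ast \in \mathcal{N}_\mathcal{B}$, the derivative of the $W$-dependence hidden inside $\mathcal{A}_{\infty,W}$ is multiplied by $\mathcal{B}\circ(W^\ast-\Phi)=0$ and drops out, so only the derivative of the inner factor contributes and
\begin{equation*}
L_{W^\ast}(Z) = -\mathcal{A}_{\infty,W^\ast}\bigl(\mathcal{B}\circ Z\bigr), \qquad Z \in T_{W^\ast} GL(d) \cong \mathbb{M}_d.
\end{equation*}
From the integral representation in~\eqref{A_N}, $\mathcal{A}_{\infty,W^\ast}$ is an average over $\tau \in [0,1]$ of operators $Z \mapsto (W^\ast (W^\ast)^T)^{1-\tau}\,Z\,((W^\ast)^T W^\ast)^\tau$; each is Frobenius self-adjoint and positive definite (simultaneously diagonalize the symmetric positive definite flanking matrices), and integration preserves both properties. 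Equivalently, the eigenvalues $D^\infty_{il}$ displayed in~\eqref{eq:metricmatrix-b} are strictly positive. Thus $\mathcal{A}_{\infty,W^\ast}$ is Frobenius self-adjoint, positive definite, and in particular invertible.

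\paragraph{Spectrum.} Let $P_\mathcal{B} Z := \mathcal{B}\circ Z$, the Frobenius-orthogonal projection onto the subspace of matrices supported on the observed entries; note $P_\mathcal{B}$ is self-adjoint because $\mathcal{B}\in\{0,1\}^{d\times d}$. Then $L_{W^\ast} = -\mathcal{A}_{\infty,W^\ast}\circ P_\mathcal{B}$, and using $g^\infty(\cdot,\cdot) = \langle \mathcal{A}_{\infty,W^\ast}^{-1}\cdot,\cdot\rangle_F$ one computes
\begin{equation*}
g^\infty(L_{W^\ast}X,Y) = -\langle P_\mathcal{B}X,Y\rangle_F = -\langle X, P_\mathcal{B}Y\rangle_F = g^\infty(X, L_{W^\ast}Y),
\end{equation*}
so $L_{W^\ast}$ is $g^\infty$-self-adjoint, has real spectrum, and diagonalizes with no Jordan blocks. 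Since $\mathcal{A}_{\infty,W^\ast}$ is invertible, $\ker L_{W^\ast} = \ker P_\mathcal{B} = \{Z : \mathcal{B}\circ Z = 0\}$, which coincides with $T_{W^\ast}\mathcal{N}_\mathcal{B}$ because $\mathcal{N}_\mathcal{B}$ is cut out linearly by the equations $\mathcal{B}\circ(W-\Phi)=0$. On the $g^\infty$-orthogonal complement of $T_{W^\ast}\mathcal{N}_\mathcal{B}$, any nonzero $Z$ satisfies $\mathcal{B}\circ Z \neq 0$, and
\begin{equation*}
g^\infty(L_{W^\ast}Z,Z) = -\langle \mathcal{B}\circ Z, Z\rangle_F = -\|\mathcal{B}\circ Z\|_F^2 < 0,
\end{equation*}
so $L_{W^\ast}$ is strictly negative definite on the normal bundle; the unstable bundle is trivial and $\mathcal{N}_\mathcal{B}$ is normally attracting at $W^\ast$.

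\paragraph{Uniformity and difficulty.} The map $W^\ast \mapsto L_{W^\ast}$ is continuous on $\mathcal{N}_\mathcal{B}$, hence so is its spectral gap $\min\{|\lambda| : \lambda \in \mathrm{spec}(L_{W^\ast}) \setminus \{0\}\}$, which therefore attains a positive minimum $\lambda_\mathcal{K}>0$ on the compact submanifold $\mathcal{K}$. Combined with the trivial rate $0$ on $T\mathcal{N}_\mathcal{B}$, this furnishes the uniform exponential rates required by the Fenichel-type definition of normal hyperbolicity. I do not anticipate a genuine obstacle: the only moment of care is the clean decoupling of the two $W$-dependencies in $\mathcal{A}_{\infty,W}\bigl(\mathcal{B}\circ(W-\Phi)\bigr)$ during linearization, which is immediate because the inner factor vanishes on $\mathcal{N}_\mathcal{B}$; beyond that the argument reduces to the positivity of $\mathcal{A}_{\infty,W^\ast}$, which is already visible in the SVD diagonalization of $g^\infty$ from Theorem~\ref{thm:volumeforms}.
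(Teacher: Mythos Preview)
Your proof is correct and follows the same high-level strategy as the paper---linearize the flow at a fixed point $W^\ast\in\mathcal{N}_\mathcal{B}$ and show the nonzero spectrum of the linearization is uniformly bounded away from the imaginary axis---but the execution differs in two respects worth noting. The paper (Lemma~\ref{lem:rates}) works in the Frobenius-orthogonal splitting into observed and unobserved coordinates, identifies the normal block of the linearization as the principal submatrix $A=[g^{\infty*}]_{i,j\in\mathcal{I}}$ of the dual metric, and then invokes the Cauchy interlacing theorem together with the spectral ordering of Lemma~\ref{lem:spectrumordering} to obtain the \emph{explicit} lower bound $\alpha_d\geq\sigma_d^2>0$ on the smallest normal attraction rate. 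You instead work in the $g^\infty$-orthogonal splitting, observe that $L_{W^\ast}=-\mathcal{A}_{\infty,W^\ast}\circ P_\mathcal{B}$ is $g^\infty$-self-adjoint (a general feature of the linearization of a Riemannian gradient flow at a critical point), deduce negative definiteness on the normal bundle directly from $g^\infty(L_{W^\ast}Z,Z)=-\|P_\mathcal{B}Z\|_F^2$, and close with a soft continuity-plus-compactness argument for uniformity. The paper's route buys a quantitative rate estimate that is reused downstream (cf.\ equation~\eqref{eq:boundsonrates}); your route is more conceptual and dispenses with both Lemma~\ref{lem:spectrumordering} and the interlacing theorem.
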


The statement is a consequence of Lemma~\ref{lem:rates}. The proof is presented in  Section~\ref{sec:attraction}.

\subsubsection{Are the training outcomes low rank matrices?}
The origin of implicit regularization was proposed to arise from (quasi-)norms in~\cite{regularizationbyrank}. This idea is motivated by classical regression, where overfitting effects are often mitigated by adding explicit regularization terms to the optimization problem. Previous work in this direction tried to explicitly construct the regularizers in the form of  (quasi-)norms. This explanation was challenged in~\cite{maynotbenorms}, where the training outcomes were observed to be low rank matrices, even though matrix norms blew up. We develop this idea in Section \ref{sec:matrixcompletion}. On one hand, we view the bias towards low rank matrices as an entropic effect determined by the volume forms above (the volume forms diverge as the singular values approach zero). On the other hand, we construct numerical examples where using low rank as a criterion does not accurately predict the training outcome under small initial conditions. The numerical examples show that within the set of low rank minimizers, the actually observed ones are also the ones with the largest concentration of volume around them.




\subsubsection{Numerical simulations for finite and infinite depth}
Section \ref{sec:matrixcompletion} details numerical simulations of system \eqref{eq:gradient-flow} for $N\leq \infty$. As equation~\eqref{eq:pushforward1} is very costly computationally for large $N$ and \eqref{A_N} can only be explicitly evaluated in terms of SVD coordinates, \eqref{eq:gradient-flow} is inefficient to simulate directly. To keep track of the evolution of SVD coordinates, we derive their dynamics directly under the flow of \eqref{eq:gradient-flow}.

In the first few examples, we make use of energy function \eqref{eq:diagonalenergy}. In Section \ref{sec:wideexample}, we demonstrate that for large $N$ and diagonal matrix completion, bias towards low rank should be viewed as bias towards {\em minimal\/} rank. This example also motivates the study of smaller matrices, where the matrix size and the minimal rank of minimizers are comparable. The example in Section \ref{hyperbolas} is on $2\times 2$ matrices; yet it illustrates the effect of depth, demonstrating the validity of the infinite depth limit.

The examples of Section \ref{sec:finiterankdefminimizers} show simulation outcomes for different choices of energy functions in the family \eqref{loss}. For these examples we chose $\mathcal{B}$ such that $E_\mathcal{B}$ has a finite number of rank deficient minimizers. We see that not all of the rank deficient minimizers are actually observed as training outcomes, contradicting the idea that low rank accurately predicts simulation outcomes. Instead, we find that the observed minimizers are the minimum rank minimizers with maximal volume.

Section \ref{sec:rankoneexample} details an example where the entire state space has minimal rank. In this case, it is impossible to to predict simulation outcomes in terms of bias towards low rank. We demonstrate that high state space volume remains predictive, and the simulation outcomes are clustered in the region of state space where the volume is maximal. 

\subsection{Organization of the paper}
The rest of this paper is organized as follows. In Section 2, we review the Riemannian geometry of the DLN and establish equations~\eqref{eq:metricmatrix-a}--\eqref{eq:metricmatrix-c}. We then extend this geometry to the infinite depth limit and prove Theorem~\ref{thm:volumeforms}. In Section 3 we use numerical experiments to show the correspondence between state space volume and implicit regularization. We also present several comparisons between finite depth networks and the infinite depth limit. Our conclusions are summarized in Section~\ref{sec:conclusion}.

\section{The Riemannian Geometry of DLN}
\label{sec:dln}
This section contains several results on the state space geometry for the DLN. We first review the balanced manifold as well as the Riemannian submersion that determines the Riemannian metric for the DLN, basing our discussion on past work~\cite{arora2019implicit,Bah2019}. This is followed by computations in SVD coordinates that provide matrix representations of the metric, including the infinite depth limit $N\to \infty$ (equations~\eqref{eq:metricmatrix-a}--\eqref{eq:metricmatrix-c}).
Finally, we prove Theorem~\ref{thm:volumeforms} on the volume forms.

\subsection{Riemannian submersion of the balanced manifold}
\label{subsec:balanced}
The geometry on $GL(d)$ is defined by a Riemannian submersion of the balanced manifold, introduced in~\cite{Bah2019} and illustrated in Figure~\ref{fig:balanced}. This observation effectively reduces the dynamics of the DLN to a space of dimension $d^2$, independent of the depth $N$. 
\begin{figure}[H]
    \centering
    \includegraphics[scale=0.8]{./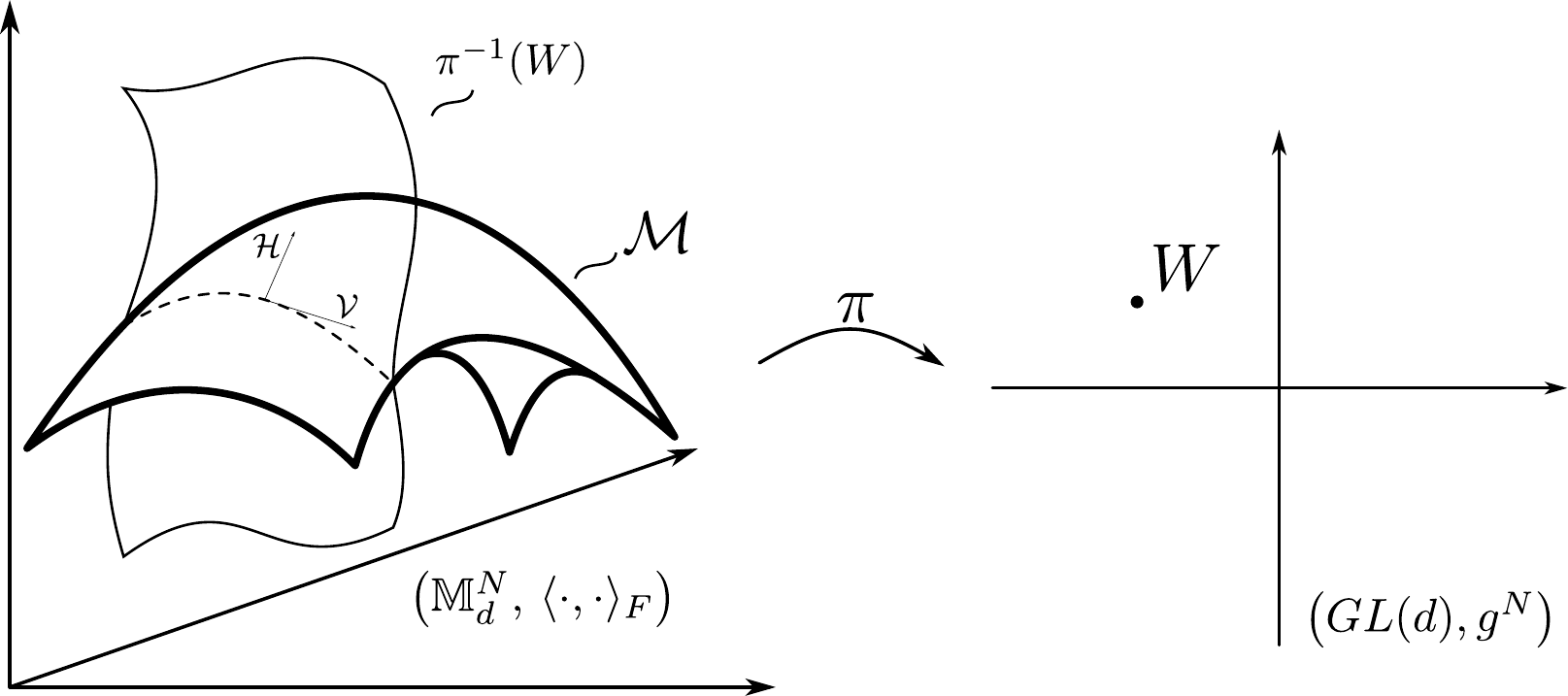}
    \caption{The left side is the optimization space and shows the balanced manifold as an immersed manifold. The right side represents the space of trained networks equipped with a Riemannian metric given by the Riemannian submersion of the balanced manifold.\label{fig:balanced}}
\end{figure}

Recall that $\mathbf{W}=(W_N,W_{N-1},\ldots,W_1) \in \mathbb{M}_d^N$ (see equation~\eqref{eq:def-bigW}).
\begin{definition}[Balanced manifold] The balanced manifold of full rank matrices is 
    \begin{equation}
        \mathcal{M}:=\left\{\mathbf{W}|W_j \in GL(d),\text{ and } G_j=0 \text{ for }j\in1\dots N-1\right\}
        \label{balancedmfd}
    \end{equation}
\end{definition}
For more on the interpretation of balancedness in machine learning, see \cite{Arora2018}. The following lemma is included for completeness; it has already been established in~\cite{Bah2019}.

\begin{lemma} [Invariant manifold] The balanced manifold is invariant under the gradient flow~\eqref{eq:gradflow1}.
\end{lemma}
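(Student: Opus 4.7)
The plan is to show the stronger statement that the quantities $G_j(t) = W_{j+1}(t)^T W_{j+1}(t) - W_j(t) W_j(t)^T$ are each conserved along the gradient flow~\eqref{eq:gradflow1}, from which invariance of the balanced manifold (the zero set of $(G_1,\dots,G_{N-1})$) follows immediately.

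First I would differentiate $G_j$ directly in $t$ using the product rule:
\begin{equation*}
\dot G_j = \dot W_{j+1}^T W_{j+1} + W_{j+1}^T \dot W_{j+1} - \dot W_j W_j^T - W_j \dot W_j^T.
\end{equation*}
Writing $P_k := \partial_{W_k} E(W)$, the chain rule applied to $W = W_N\cdots W_1$ gives the explicit expression
\begin{equation*}
P_k = W_{k+1}^T\cdots W_N^T\,\partial_W E(W)\,W_1^T\cdots W_{k-1}^T,
\end{equation*}
already recorded in the excerpt. Substituting $\dot W_k = -P_k$ from~\eqref{eq:gradflow1} turns $\dot G_j$ into a sum of four matrix products built out of $\partial_W E(W)$ flanked by ordered products of the weight matrices and their transposes.

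The key algebraic observation is a pairwise cancellation: a direct manipulation of the index ranges shows
\begin{equation*}
W_{j+1}^T P_{j+1} = P_j W_j^T \quad \text{and} \quad P_{j+1}^T W_{j+1} = W_j P_j^T,
\end{equation*}
since both sides of the first identity expand to $(W_{j+1}^T\cdots W_N^T)\,\partial_W E(W)\,(W_1^T\cdots W_j^T)$, and the second is its transpose. Thus all four terms cancel and $\dot G_j \equiv 0$ for every $j$. I would then conclude: any trajectory of~\eqref{eq:gradflow1} with initial condition $\mathbf{W}(0)\in\mathcal{M}$ has $G_j(t)=G_j(0)=0$ for all $t$ in the interval of existence, so $\mathbf{W}(t)\in\mathcal{M}$.

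The only real obstacle is bookkeeping the ordering of the weight matrices and their transposes carefully enough to see the cancellation; once the telescoping identity $W_{j+1}^T P_{j+1}=P_j W_j^T$ is verified, the rest is immediate. A minor point worth mentioning is that since we restrict to $W_k\in GL(d)$, one should note that invertibility is an open condition preserved on some time interval, so the conservation law $\dot G_j=0$ is valid on the whole interval of existence of the flow in $GL(d)^N$, which suffices for invariance of $\mathcal{M}$ as defined in~\eqref{balancedmfd}.
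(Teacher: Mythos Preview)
Your proof is correct and follows essentially the same approach as the paper: both show the stronger fact that each $G_j$ is conserved along the flow by differentiating and observing the cancellation (you spell out the identity $W_{j+1}^T P_{j+1}=P_j W_j^T$ that the paper leaves implicit). The only content in the paper's proof that you omit is a brief verification that the balanced condition is a regular value, so that $\mathcal{M}$ is indeed a smooth manifold; this is not needed for the invariance claim per se.
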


\begin{proof}
    The invariance of $G_j$ (not just $G_j=0$) along trajectories can be checked by direct computation of the derivative along trajectories of \eqref{eq:gradflow1}:
        \begin{equation}
        \dot G_j= \dot W_{j+1}^T W_{j+1}+ W_{j+1}^T \dot W_{j+1} - \dot W_{j}  W_{j}^T -W_{j} \dot W_{j}^T=0.
    \end{equation}

    We need to show that (\ref{balancedmfd}) implicitly defines a manifold. For that we need to check if its differential is surjective as a linear map from $\mathbb{M}_{d} \times \mathbb{M}_{d}$ 
    to the space of $d\times d$ symmetric matrices. In other words we need to show that for $W_j, W_{j+1} \in GL(d)$ and arbitrary symmetric $S$ the equation
    \begin{equation}
        \dot W_{j+1}^T W_{j+1}+W_{j+1}^T \dot W_{j+1}-\dot W_j W_j^T-W_j \dot W_j^T=S
    \end{equation}
    has a solution for $\dot W_j, \dot W_{j+1}$. It is easy to see that $\dot W_{j+1}^T W_{j+1}+W_{j+1}^T \dot W_{j+1}$ and $-\dot W_j W_j^T-W_j \dot W_j^T$ are arbitrary symmetric matrices. Thus the problem reduces to writing $S$ as a sum of two symmetric matrices.
\end{proof}

The invariance of $G_j$ in the above proof does not require that $G_j=0$, $1 \leq j \leq N$. In practice, $\mathcal{M}$ is particularly important because the DLN is initialized with small initial conditions. Thus, all singular values are small, and the dynamical system begins close to $\mathcal{M}$. 

\begin{lemma} (Symmetries)
\label{lem:symmetry}
    For $Q \in O(d)$, let $L_i(Q)$ denote the linear map
    \begin{equation}
        L_i(Q)(\mathbf{W})=\left(W_N, W_{N-1},\dots,W_{i+1}Q,Q^TW_{i},\dots, W_1\right). 
    \end{equation}
    Then $\pi\left(L_i\left(Q\right)\left(\mathbf{W}\right)\right)=\pi(\mathbf{W})$ and $L_i\left(Q\right)\left(\mathcal{M}\right)=\mathcal{M}$.
\end{lemma}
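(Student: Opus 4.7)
The plan is to verify both claims by direct computation on the factors, exploiting $QQ^T = Q^TQ = I$ throughout.

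For the first claim, $\pi(L_i(Q)(\mathbf{W})) = \pi(\mathbf{W})$, I would simply expand the product defining $\pi$ in equation~\eqref{eq:endend}. The only factors that differ from the original $\mathbf{W}$ are $W_{i+1}Q$ and $Q^TW_i$, and these appear adjacent to each other in the product; so the product telescopes via $(W_{i+1}Q)(Q^TW_i) = W_{i+1}W_i$, leaving $\pi$ unchanged. This is a one-line calculation.

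For the second claim, $L_i(Q)(\mathcal{M}) = \mathcal{M}$, I would track how each balancedness matrix $G_j = W_{j+1}^T W_{j+1} - W_j W_j^T$ transforms. For $j \notin \{i-1, i, i+1\}$ neither of the factors appearing in $G_j$ is altered, so $G_j$ is unchanged. For $j = i-1$ (when this index exists), the relevant term is $W_i W_i^T \mapsto (Q^T W_i)(Q^T W_i)^T = Q^T W_i W_i^T Q$, wait no — it appears as $W_i^{\text{new}\, T} W_i^{\text{new}} = W_i^T Q Q^T W_i = W_i^T W_i$, so $G_{i-1}$ is unchanged. Similarly for $j = i+1$, we get $W_{i+1}^{\text{new}}(W_{i+1}^{\text{new}})^T = W_{i+1}QQ^TW_{i+1}^T = W_{i+1}W_{i+1}^T$, leaving $G_{i+1}$ unchanged. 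Only $G_i$ transforms nontrivially, as
\begin{equation}
G_i^{\text{new}} = (W_{i+1}Q)^T(W_{i+1}Q) - (Q^TW_i)(Q^TW_i)^T = Q^T G_i Q.
\end{equation}
Thus $G_j = 0$ for all $j$ implies $G_j^{\text{new}} = 0$ for all $j$. Since each of $W_i$ and $W_{i+1}$ is altered only by left or right multiplication by an element of $O(d) \subset GL(d)$, invertibility of the weights is preserved. This shows $L_i(Q)(\mathcal{M}) \subseteq \mathcal{M}$.

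To upgrade to equality, I would observe that $L_i(Q^T) \circ L_i(Q)$ is the identity on $\mathbb{M}_d^N$ (again because $QQ^T = I$), so $L_i(Q)$ is a bijection whose inverse $L_i(Q^T)$ also preserves $\mathcal{M}$ by the same argument. Hence $L_i(Q)(\mathcal{M}) = \mathcal{M}$. There is no real obstacle here; the only point to keep in mind is the edge cases $i = 1$ and $i = N-1$, where either $G_{i-1}$ or $G_{i+1}$ is not defined, but the argument is unaffected since fewer matrices need checking.
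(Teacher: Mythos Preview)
Your proposal is correct and follows essentially the same direct-computation approach as the paper, checking that the product telescopes and that the three balancedness conditions involving the modified factors are preserved. If anything, you are more thorough: the paper only verifies the inclusion $L_i(Q)(\mathcal{M}) \subseteq \mathcal{M}$ implicitly and does not mention invertibility or the inverse map $L_i(Q^T)$, whereas you make the equality and the edge cases explicit.
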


\begin{proof}
    Both statements are obtained through direct computations. In order to see that $\pi\left(L_i\left(Q\right)\left(\mathbf{W}\right)\right)=\pi\left(\mathbf{W}\right)$, we compute
    \[ W_NW_{N-1}\cdots W_{i+1}Q^T Q W_i \cdots W_1 = W_NW_{N-1}\cdots W_{i+1}W_i \cdots W_1,\]
    since $QQ^T=I$. 
    
    Next assume $\mathbf{W}$ is balanced and observe that under the action of $L_i(Q)$
    \begin{equation}
         W_{i+1} Q Q^T W_{i+1}^T=W_{i+1} W_{i+1}^T= W_{i+2}^TW_{i+2}  ,
    \end{equation}
    \begin{equation}
          Q^T W_{i+1}^T  W_{i+1} Q=Q^T W_{i}  W_{i}^T Q
    \end{equation}
    and
    \begin{equation}
         W_{i}^T Q Q^T  W_{i} Q=W_{i}^T   W_{i}= W_{i-1}  W_{i-1}^T.
    \end{equation}

\end{proof}



\subsection{The operator $\mathcal{A}_{N,W}$ and the metric $g^N$}
\label{subsec:metric}
The metric $g^N$ was defined in~\cite{Bah2019} using the linear operator $\mathcal{A}_{N,W}: T_WGL(d) \to \mathbb{M}_{d}$ (see equations~\eqref{eq:pushforward1}--\eqref{metric} and~\cite[Defn.3]{Bah2019}). We find it convenient to represent $\mathcal{A}_N$ and $g^N$ in SVD coordinates since this yields explicit formulas for the metric and volume form. In all that follows we write the SVD of a matrix $W$ as 
\begin{equation}
    \label{eq:svd-notation}
    W = U\Sigma V^T, \quad \Sigma = \mathrm{diag}(\sigma_1, \ldots, \sigma_d), \quad \sigma_1 \geq \sigma_2 \geq \ldots \geq \sigma_d.
\end{equation}
The standard basis of $\mathbb{M}_d$ is denoted by $E_{ij}$, $1\leq i,j\leq d$.
\begin{lemma}[Spectral decomposition of $\mathcal{A}_{N,W}$]
\label{diagonalization}
   For finite $N$, the $d^2$ number of eigenvalues of $\mathcal{A}_{N,W}$ are 
    \begin{equation}
    \label{eq:eig1}
        \lambda^N_{il}=\frac{1}{N}\sum_{j=1}^N \left(\sigma_i^2\right)^\frac{N-j}{N} \left(\sigma^2_l\right)^\frac{j-1}{N}=\frac{\left(\sigma_i^2\right)^{\frac{N-1}{N}}}{N} \frac{1-\frac{\sigma_l^2}{\sigma_i^2}}{1-\left(\frac{\sigma_l^2}{\sigma_i^2}\right)^\frac{1}{N}},\,\, i,l \in 1,\dots,d.
    \end{equation}
    In the limit $N=\infty$, the eigenvalues are 
    \begin{equation}
    \label{eq:eig2}
        \lambda^\infty_{il}=\frac{\sigma_i^2-\sigma_l^2}{2\log \left(\sigma_i/\sigma_l\right)},\, i,l \in 1,\dots,d.
    \end{equation}
    The corresponding eigenvectors are independent of $N$ and are given by
    \begin{equation}
        T_{il}=U E_{il} V^T, \quad 1\leq i,l \leq d.
    \end{equation}
\end{lemma}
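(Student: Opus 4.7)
The plan is to diagonalize $\mathcal{A}_{N,W}$ by guessing the eigenbasis from the SVD structure of $W$ and then verifying by direct substitution. The key observation is that $WW^T = U\Sigma^2 U^T$ and $W^T W = V\Sigma^2 V^T$, so their fractional powers diagonalize simultaneously in the orthonormal frames $U$ (on the left) and $V$ (on the right). This suggests that the natural candidates for eigenvectors of $\mathcal{A}_{N,W}$ are the rank-one matrices $T_{il}=UE_{il}V^T$, since $E_{il}$ picks out the $(i,l)$-entry in any diagonal sandwich.

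First I would compute $\mathcal{A}_{N,W}(T_{il})$ directly. Substituting $(WW^T)^{(N-j)/N}=U\Sigma^{2(N-j)/N}U^T$ and $(W^TW)^{(j-1)/N}=V\Sigma^{2(j-1)/N}V^T$ into equation~\eqref{eq:pushforward1} and using $U^TU=V^TV=I$, the inner factors collapse to $\Sigma^{2(N-j)/N}E_{il}\Sigma^{2(j-1)/N}$. Since $E_{il}$ isolates the $i$-th row and $l$-th column of the diagonal factors, this equals $\sigma_i^{2(N-j)/N}\sigma_l^{2(j-1)/N}E_{il}$, so one reads off
\begin{equation*}
\mathcal{A}_{N,W}(T_{il}) = \left(\frac{1}{N}\sum_{j=1}^N\sigma_i^{2(N-j)/N}\sigma_l^{2(j-1)/N}\right)T_{il},
\end{equation*}
which is exactly equation~\eqref{eq:eig1}. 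The closed-form expression in~\eqref{eq:eig1} then follows by recognizing the sum as a geometric series with common ratio $(\sigma_l^2/\sigma_i^2)^{1/N}$ (with the convention that for $\sigma_i=\sigma_l$ one reads off $\lambda^N_{ii}=\sigma_i^{2(N-1)/N}$ from the unsimplified sum).

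For the infinite depth limit, I would interpret the eigenvalue as a Riemann sum: writing $\tau_j=(j-1)/N$, the expression $\frac{1}{N}\sum_{j=1}^N(\sigma_i^2)^{1-\tau_j}(\sigma_l^2)^{\tau_j}$ converges as $N\to\infty$ to
\begin{equation*}
\int_0^1(\sigma_i^2)^{1-\tau}(\sigma_l^2)^\tau\,d\tau = \frac{\sigma_i^2-\sigma_l^2}{\log(\sigma_i^2/\sigma_l^2)} = \frac{\sigma_i^2-\sigma_l^2}{2\log(\sigma_i/\sigma_l)},
\end{equation*}
yielding~\eqref{eq:eig2}. This is consistent with the integral definition of $\mathcal{A}_{\infty,W}$ in equation~\eqref{A_N}, so the same eigenvectors $T_{il}$ diagonalize $\mathcal{A}_{\infty,W}$ with the limiting eigenvalues.

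Finally, to confirm that this is a complete spectral decomposition rather than just a list of eigenpairs, I would note that $\{E_{il}\}_{1\leq i,l\leq d}$ is an orthonormal basis of $\mathbb{M}_d$ in the Frobenius inner product, and conjugation by the orthogonal matrices $U$ and $V$ preserves orthonormality, so $\{T_{il}\}$ is an orthonormal basis of $\mathbb{M}_d\simeq T_W GL(d)$ consisting of $d^2$ eigenvectors. No step presents a serious obstacle; the only mild subtlety is handling the case $\sigma_i=\sigma_l$, which one treats either by returning to the unsimplified geometric sum or by taking the continuous limit of~\eqref{eq:eig1} and~\eqref{eq:eig2}, giving $\lambda^\infty_{ii}=\sigma_i^2$ in accordance with $D^\infty_{ii}=1/\sigma_i^2$ in~\eqref{eq:metricmatrix-b}.
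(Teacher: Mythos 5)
Your proposal is correct and follows essentially the same route as the paper: the paper conjugates coordinates via $\tilde X = U^T X V$ to exhibit the diagonal action of $\mathcal{A}_{N,W}$, which is the same computation as your direct verification that $T_{il}=UE_{il}V^T$ are eigenvectors, and the $N\to\infty$ limit is likewise handled as a Riemann sum (with the $i=l$ case treated separately). Your added remarks on the geometric-series closed form and on $\{T_{il}\}$ forming an orthonormal basis of $\mathbb{M}_d$ are fine and merely make explicit what the paper leaves implicit.
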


\begin{proof}
      Let
      \begin{equation}
        Y=\mathcal{A}_{N}(X),
      \end{equation}
      and introduce new variables $\tilde{X}=U^T X V$ and $\tilde{Y}=U^T Y V$. By the definition of $\mathcal{A}_{N}$
      \begin{equation}
        \tilde{Y}=\frac{1}{N}\sum_{j=1}^N \left(\Sigma^2\right)^\frac{N-j}{N} \tilde{X} \left(\Sigma^2\right)^\frac{j-1}{N}
      \end{equation}
    Notice that we have obtained a diagonal form in coordinates:
      \begin{equation}
      \label{eq:tilde-y}
          \tilde{y}_{il}=\tilde{x}_{il}\frac{1}{N}\sum_{j=1}^N \left(\sigma_i^2\right)^\frac{N-j}{N} \left(\sigma^2_l\right)^\frac{j-1}{N},
        \end{equation}
    which proves equation~\eqref{eq:eig1}. 
    
    The proof of equation~\eqref{eq:eig2} is similar. Fix $i \neq l$ and take the limit $N\to \infty$ in equation~\eqref{eq:tilde-y} to obtain
    \begin{equation}
        \lim_{N\rightarrow \infty}\lambda^N_{il}=\lim_{N\rightarrow \infty}\frac{1}{N}\sum_{j=1}^N \left(\sigma_i^2\right)^\frac{N-j}{N} \left(\sigma^2_l\right)^\frac{j-1}{N} =  \int_0^1 \sigma_i^{2(1-t)} \sigma_l^{2t} dt= \frac{\sigma_i^2-\sigma_l^2}{\log \left(\sigma_i^2/\sigma_l^2\right)}.
      \end{equation}

      Finally, when  $i=l$
      \begin{equation}
        \lim_{N\rightarrow \infty} \lambda^N_{il} = \lim_{N\rightarrow \infty} \left(\sigma_i^2\right)^\frac{N-1}{N} = \sigma_i^2.
      \end{equation}
\end{proof}

As a direct consequence of Lemma~\ref{diagonalization}, we obtain an explicit matrix representation for the DLN metric \eqref{metric}. Let the vectorization of the matrix coordinate one-forms be denoted $dw^\alpha$, $\alpha=1,\dots,d^2$. Then the metric  $g^N=g^N_{\alpha \beta} dw^\alpha \otimes dw^\beta$, $N \leq \infty$ at a point $W=U \Sigma V^T$  is given by
    \begin{equation}
        g^N = \left(V\otimes U\right)D^N\left(\Sigma\right)\left(V\otimes U\right)^T, \label{metricmatrix}
    \end{equation}
    where $D^N$ is the diagonal operator whose non-zero entries are
\begin{equation}
\label{eq:dn-il}
    \frac{1}{\lambda_{il}^N}, \quad 1 \leq i,l \leq d.
\end{equation}  
The above calculations prove equations~\eqref{eq:metricmatrix-a}--\eqref{eq:metricmatrix-c}.



The following inequalities are used to establish normal hyperbolicity of an invariant manifold in Theorem~\ref{thm:nhim} below.
    \begin{lemma}[Inequalities on the spectrum of $\mathcal{A}_{\infty,W}$]
        Let $1\leq i<j\leq k<l\leq d$, then $\lambda^\infty_{jj} \leq \lambda^\infty_{ij} \leq\lambda^\infty_{ii}$ and $\lambda^\infty_{ij} \leq\lambda^\infty_{kl}$.
        \label{lem:spectrumordering}
    \end{lemma}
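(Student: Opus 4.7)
The key is to recognize that the eigenvalues $\lambda^\infty_{il}$ are \emph{logarithmic means} of the squared singular values. Indeed, the integral computation in the proof of Lemma~\ref{diagonalization} shows that, for every $i,l$,
$$\lambda^\infty_{il} = \int_0^1 \sigma_i^{2(1-t)}\sigma_l^{2t}\,dt =: L(\sigma_i^2,\sigma_l^2),$$
which reduces to the closed form $(\sigma_i^2-\sigma_l^2)/(2\log(\sigma_i/\sigma_l))$ appearing in \eqref{eq:eig2} when $i\neq l$, and, by continuity of the integrand in its parameters, to $\sigma_i^2$ when $i=l$. With this identification in hand, both claims become standard properties of the map $(a,b)\mapsto L(a,b)$ on $(0,\infty)^2$.

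Two elementary properties of $L$ are needed, each visible at a glance from the integral representation. First, since $a^{1-t}b^t$ lies between $\min(a,b)$ and $\max(a,b)$ for every $t\in[0,1]$, integration gives the min-max bound
$$\min(a,b)\leq L(a,b)\leq \max(a,b),$$
with equality iff $a=b$. Second, $\partial_a[a^{1-t}b^t] = (1-t)a^{-t}b^t\geq 0$ and symmetrically in $b$, so $L$ is nondecreasing in each argument, i.e.\ $a\leq a'$ and $b\leq b'$ imply $L(a,b)\leq L(a',b')$.

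The first chain $\lambda^\infty_{jj}\leq \lambda^\infty_{ij}\leq \lambda^\infty_{ii}$ is then immediate: $i<j$ together with the decreasing convention gives $\sigma_i^2\geq \sigma_j^2$, and the min-max bound applied to $a=\sigma_i^2$, $b=\sigma_j^2$ reads exactly $\sigma_j^2\leq L(\sigma_i^2,\sigma_j^2)\leq \sigma_i^2$. The second claim compares $L(\sigma_i^2,\sigma_j^2)$ with $L(\sigma_k^2,\sigma_l^2)$; the nesting $i<j\leq k<l$ fixes a coordinate-wise ordering of the two pairs of arguments, and the desired inequality reduces to the monotonicity of $L$ in each variable.

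The expected main obstacle is purely bookkeeping: the diagonal case $i=l$ must not be lost when asserting $\lambda^\infty_{il}=L(\sigma_i^2,\sigma_l^2)$, since the closed-form formula \eqref{eq:eig2} is singular there. Using the integral representation uniformly handles this, and the resulting map $(a,b)\mapsto L(a,b)$ extends continuously across the diagonal. Beyond that, the argument is entirely elementary and relies on nothing beyond the spectrum computed in Lemma~\ref{diagonalization} and the two features of the logarithmic mean listed above.
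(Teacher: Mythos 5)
Your treatment of the first chain is correct and is essentially the paper's argument in different clothing: the paper bounds the reciprocal $1/\lambda^\infty_{ij}=(\log\sigma_i^2-\log\sigma_j^2)/(\sigma_i^2-\sigma_j^2)$ by the derivative sandwich for the concave function $\log$, while you bound the logarithmic mean $L(\sigma_i^2,\sigma_j^2)=\int_0^1\sigma_i^{2(1-t)}\sigma_j^{2t}\,dt$ between $\min$ and $\max$; these are the same elementary fact, and your integral representation does handle the diagonal case $i=l$ more uniformly than the closed form.

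The gap is in the second inequality. With the decreasing convention $\sigma_1\geq\cdots\geq\sigma_d$ --- which you yourself invoke for the first chain --- the condition $i<j\leq k<l$ gives $\sigma_i^2\geq\sigma_k^2$ and $\sigma_j^2\geq\sigma_l^2$, so the coordinate-wise monotonicity of $L$ that you cite yields $\lambda^\infty_{ij}=L(\sigma_i^2,\sigma_j^2)\geq L(\sigma_k^2,\sigma_l^2)=\lambda^\infty_{kl}$, i.e.\ the \emph{reverse} of what you assert it delivers. So the sentence ``the desired inequality reduces to the monotonicity of $L$'' is not a proof of the statement as printed; carried out, it establishes the opposite direction, and indeed the printed inequality fails under the decreasing convention: with $\sigma^2=(9,4,1)$ and $(i,j,k,l)=(1,2,2,3)$ one has $\lambda^\infty_{12}=5/\log(9/4)\approx 6.17$ while $\lambda^\infty_{23}=3/\log 4\approx 2.16$. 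The paper's own one-line justification (``follows from the concavity of $\log$'', i.e.\ monotonicity of the difference quotient of a concave function) likewise produces $\lambda^\infty_{ij}\geq\lambda^\infty_{kl}$, and the only fact used downstream (Theorem~\ref{thm:nhim} and \eqref{eq:boundsonrates}) is the lower bound $\lambda^\infty_{il}\geq\sigma_d^2$, which already follows from your first chain. The right move is therefore to prove the monotonicity statement with the direction tracked ($\lambda^\infty_{ij}\geq\lambda^\infty_{kl}$) and to flag the sign in the lemma, rather than to claim that monotonicity gives the inequality as written --- ironically, this is exactly the kind of bookkeeping you identified as the main obstacle, but on the off-diagonal comparison rather than the diagonal case.
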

    
    \begin{proof}
        $\lambda^\infty_{jj}=\sigma_j^2 \leq\sigma_i^2=\lambda^\infty_{ii}$ since $\sigma_j \leq\sigma_i$.
        
        Given a concave differentiable function $f$ and $x<y$,
        \begin{equation}
            f'(y) \leq\frac{f(y)-f(x)}{y-x} \leq f'(x).
        \end{equation}
        Thus
        \begin{equation}
            \frac{d}{dx} \left.\log(x)\right\vert_{\sigma_j^2}=\frac{1}{\sigma_j^2}\geq \frac{\log \sigma_j^2 -\log \sigma_i^2}{\sigma_j^2-\sigma_i^2} \geq \frac{d}{dx} \left.\log(x)\right\vert_{\sigma_i^2}=\frac{1}{\sigma_i^2},
        \end{equation}
        where the middle term is
        \begin{equation}
            \frac{\log \sigma_j^2 -\log \sigma_i^2}{\sigma_j^2-\sigma_i^2} = \frac{1}{\lambda^\infty_{ij}}.
        \end{equation}
        The remaining inequality also follows from the concavity of $\log(x)$.
    \end{proof}

 \subsection{Volume forms and the proof of Theorem \ref{thm:volumeforms}}
 Vandermonde determinants appear often in random matrix theory as the Jacobians for diagonalization (see for example~\cite[\S 5.3]{Deift}, \cite[\S 2.2]{Govind_RMT} and Lemma~\ref{svdlemma} below). Theorem~\ref{thm:volumeforms} reflects an analogous feature of the DLN geometry. The proof is an easy consequence of Lemma~\ref{diagonalization}.
   
      \begin{proof}[Proof of Theorem \ref{thm:volumeforms}]
      We compute the determinants of the matrices $g^N$ and $g^\infty$ as a product of the  reciprocal of the eigenvalues of $\mathcal{A}_{N,W}$ given in Lemma~\ref{diagonalization}. For $g^\infty$ we find
    \begin{equation}
      \begin{split}
        \sqrt{\det g^\infty} dW=\sqrt{\frac{1}{\sigma_1^2 \dots \sigma_d^2} \prod_{i\neq j} \frac{\log\left(\sigma_i^2\right)-\log\left(\sigma_j^2\right)}{\sigma_i^2-\sigma_j^2}} dW
      \\=
        \frac{1}{\sqrt{\det\left(\Sigma^2\right)}} \prod_{i< j} \frac{\log\left(\sigma_i^2\right)-\log\left(\sigma_j^2\right)}{\sigma_i^2-\sigma_j^2} dW
      \\=
        \frac{\mathrm{van}\left(\log \Sigma^2\right)}{\sqrt{\det\left(\Sigma^2\right)} \mathrm{van} \left(\Sigma^2\right)}  dW.
      \end{split}
      \label{eq:volumedW}
      \end{equation}
      
      Similarly, for $N<\infty$, when $i \neq l$ we use the eigenvalues of $\mathcal{A}_N$ to find

      \begin{equation}
        \begin{split}
        \lambda^N_{il}=\frac{1}{N}\sum_{j=1}^N \left(\sigma_i^2\right)^\frac{N-j}{N} \left(\sigma^2_l\right)^\frac{j-1}{N}= \frac{\left(\sigma_i^2\right)^{\frac{N-1}{N}}}{N}
       \left(1+\left(\frac{\sigma_l^2}{\sigma_i^2}\right)^\frac{1}{N}+\dots+\left(\frac{\sigma_l^2}{\sigma_i^2}\right)^\frac{N-1}{N}\right)
        \\=\frac{\left(\sigma_i^2\right)^{\frac{N-1}{N}}}{N} \frac{1-\frac{\sigma_l^2}{\sigma_i^2}}{1-\left(\frac{\sigma_l^2}{\sigma_i^2}\right)^\frac{1}{N}}.
      \end{split}
      \end{equation}

      In the case $i = l$ we obtain instead
      \begin{equation}
        \lambda^N_{il} = \left(\sigma_i^2\right)^\frac{N-1}{N}.
      \end{equation}

      The volume form is given by the product of the reciprocals of all the eigenvalues $\lambda_{il}^N$, $1 \leq i,l \leq d$. Thus, 

        \begin{equation}
          \begin{split}
            \sqrt{\det g^N}dW&=\sqrt{\prod_{i=1}^d\frac{1}{\left(\sigma_i^2\right)^\frac{N-1}{N}} \prod_{i\neq l} N\left(\sigma_i^2\right)^{\frac{1-N}{N}} \frac{1-\left(\frac{\sigma_l^2}{\sigma_i^2}\right)^\frac{1}{N}}{1-\frac{\sigma_l^2}{\sigma_i^2}}} dW
            \\&=\frac{N^\frac{d(d-1)}{2}\det(\Sigma^2)^{\frac{1-N}{2N}}}{V(\Sigma^2)}V\left(\Sigma^{2/N}\right) dW.
          \end{split}
        \end{equation}
Finally, we use Lemma~\ref{svdlemma} below to express $dW$ in SVD coordinates completing the proof of Theorem~\ref{thm:volumeforms}.
      
      \end{proof}
\begin{remark}[Volume form in new coordinates]
        Notice that using the coordinates $\Lambda=\log \left( \Sigma\right)$, we have $\frac{d\lambda_i}{d\sigma_i}=\frac{1}{\sigma_i}$ and thus
        \begin{equation}
            \sqrt{\det g^\infty} dW= 2^\frac{d(d-1)}{2} \text{van} \Lambda\, d\Lambda dU dV.
        \end{equation}

        This shows a strong formal similarity between the DLN and the Gaussian Orthogonal Ensemble of random matrix theory, suggesting the study of probability distributions and large $d$ asymptotics.
    \end{remark}
    
    \subsection{The Jacobian of singular value decomposition}
    The following Lemma is included for completeness since we were unable to find a convenient reference.

      \begin{lemma}
      \label{svdlemma}
        The Jacobian determinant of the SVD map: $W \mapsto (U,\Sigma,V) \in O(d) \times \mathbb{R}^d\times O(d)$ is given by the Vandermonde determinant $ \mathrm{van}(\Sigma^2)$.
      \end{lemma}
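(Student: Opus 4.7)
The plan is to differentiate the defining identity $W=U\Sigma V^T$ and pull back the Euclidean volume form on $\mathbb{M}_d$ to the product coordinates $(U,\Sigma,V)$, working on the open dense subset where the singular values are distinct so that a smooth local branch of the SVD is well-defined.

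First I would differentiate $W=U\Sigma V^T$ to obtain $dW = dU\,\Sigma V^T + U\,d\Sigma\,V^T + U\Sigma\,dV^T$, then set $\widetilde{dW}:=U^T\,dW\,V$. Since the map $W\mapsto U^TWV$ is an isometry of $\mathbb{M}_d$ (with $U,V \in O(d)$) it preserves the volume form. Introducing $A:=U^T\,dU$ and $B:=V^T\,dV$, both skew-symmetric because $U^TU=V^TV=I$, and using $dV^T V=-B$, one obtains
\begin{equation*}
\widetilde{dW} \;=\; A\,\Sigma + d\Sigma - \Sigma\,B.
\end{equation*}

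Next I would read off components. The diagonal entries give $\widetilde{dW}_{ii}=d\sigma_i$, since $A$ and $B$ have zero diagonals. For each pair $i<j$ the off-diagonal entries decouple into the independent $2\times 2$ system
\begin{equation*}
\begin{pmatrix}\widetilde{dW}_{ij}\\ \widetilde{dW}_{ji}\end{pmatrix}
=\begin{pmatrix}\sigma_j & -\sigma_i\\ -\sigma_i & \sigma_j\end{pmatrix}
\begin{pmatrix}A_{ij}\\ B_{ij}\end{pmatrix},
\end{equation*}
whose determinant is $\sigma_j^2-\sigma_i^2$. These $\binom{d}{2}$ blocks are mutually independent and also independent of the diagonal equations.

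Finally, recalling that the Haar volume forms $dU$ and $dV$ on $O(d)$ are (up to a universal normalization which we absorb into $dU,dV$) the wedge products of the strict upper-triangular entries of $A$ and $B$ respectively, I would take the wedge product of all the independent one-forms above to get
\begin{equation*}
dW \;=\; \prod_{i<j}\bigl|\sigma_i^2-\sigma_j^2\bigr|\;d\Sigma\,dU\,dV \;=\; \mathrm{van}(\Sigma^2)\,d\Sigma\,dU\,dV.
\end{equation*}
The main subtlety is the lower-dimensional stratum where singular values coincide: there the SVD ceases to be a local diffeomorphism and the $2\times 2$ block above becomes singular, but $\mathrm{van}(\Sigma^2)$ vanishes precisely along this stratum, so the formula extends by continuity from the open dense complement.
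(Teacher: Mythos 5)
Your proposal is correct and follows essentially the same route as the paper's proof: differentiate $W=U\Sigma V^T$, conjugate by $U^T(\cdot)V$ to reduce to the linear map $(A,B,d\Sigma)\mapsto A\Sigma+d\Sigma-\Sigma B$, and evaluate its determinant on the skew parts pairwise, yielding $\prod_{i<j}(\sigma_i^2-\sigma_j^2)=\mathrm{van}(\Sigma^2)$. The only cosmetic difference is that you compute the $2\times 2$ block determinant $\sigma_j^2-\sigma_i^2$ for each pair $(i,j)$ directly, whereas the paper assembles the same data into one block matrix with commuting diagonal blocks and takes $|\det(D_2^2-D_1^2)|$.
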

      \begin{proof}
      Assume $W$ is a matrix with distinct singular values. Consider a $C^1$ curve $W(t)$, $t \in [-1,1]$, with $W(0)=W$ such that the SVD coordinates of $W(t)$, written $U(t)\Sigma(t)V(t)^T$ are also $C^1$. Let $\dot{W}(0)$ be denoted $\dot{W}$ and similarly for $U$,$\Sigma$ and $V$. We compute
                 \begin{equation}
                  \dot W=\dot U\Sigma V^T + U \dot \Sigma V^T + U\Sigma \dot V^T.
                \end{equation}
              Since $U,V$ are orthogonal and $\Sigma$ is diagonal,
              \[ \dot{U}=UA^U, \quad \dot{V}=V A^V, \quad \dot{\Sigma}=D\]
              where $A^U$ and $A^V$ are skew-symmetric and $D$ is diagonal. Thus, we may write
              \begin{equation}
              \dot W=U(A^U \Sigma + D + \Sigma A^V)V^T.
              \end{equation}
            This expression defines $\dot{W}$ as the image of a linear map from $TO(d)\times TO(d) \times T\mathbb{R}^d$ to $TGL(d)$. The Jacobian we are seeking is the determinant of this map.  We compute this determinant, by first simplifying the expression above with the isometry $\dot{W}\mapsto U^T\dot{W}V$, and defining the linear transformation
            \begin{equation}
            \mathcal{L}: TO(d)\times TO(d) \times T\mathbb{R}^d \to \mathbb{R}^{d\times d}, \quad (A^U,A^V,D) \mapsto A^U \Sigma + D + \Sigma A^V.
              \end{equation}
           Finally, we compute $\det(\mathcal{L})$ as follows.  First observe that the action of $\mathcal{L}$ on diagonal matrices has eigenvalue $1$ with multiplicity $d$. The action of $\mathcal{L}$ on $TO(d)\times TO(d)$ is given in coordinates by
                \begin{equation}
                  \sum_j a_{ij}^U \delta_{jk} \sigma_j + \delta_{ij} \sigma_i a_{jk}^V.
                \end{equation}

                Introducing the row-major half-vectorization for the skew-symmetric matrices (example for $d=3$: $\text{vec}(A^U)=[a_{12}^U\, a_{13}^U\, a^U_{23}]^T$) the action of $\mathcal{L}$ is now represented by the block matrix with diagonal blocks:
                \begin{equation}
                  \begin{bmatrix}
                    D_1 & D_2 \\ -D_2 & -D_1
                  \end{bmatrix}
                  \begin{bmatrix}
                    \text{vec}(A^U) \\ \text{vec}(A^V)
                  \end{bmatrix},
                  \label{eq:blockmatrix}
                \end{equation}
                where $D_1$ is
                \begin{equation}
                  D_1=\text{diag}(\underbrace{\sigma_1,\dots,\sigma_1}_{d-1}, \underbrace{\sigma_2,\dots,\sigma_2}_{d-2},\dots, \underbrace{\sigma_{d-1}}_1),
                \end{equation}
                and $D_2$ is
                \begin{equation}
                  D_2=\text{diag}(\underbrace{\sigma_2,\dots,\sigma_d}_{d-1}, \underbrace{\sigma_3,\dots,\sigma_d}_{d-2},\dots, \underbrace{\sigma_{d-1},\sigma_{d}}_{2}, \underbrace{\sigma_{d}}_{1}).
                \end{equation}
                The matrices $D_1$ and $D_2$ commute because they are diagonal. Thus, the absolute value of the determinant of the block matrix above is
                \begin{equation}
                  |\det(D_2^2-D_1^2)|=|\det\text{diag}(\sigma_i^2-\sigma_j^2,\,i>j)|=\prod_{i<j}(\sigma_i^2-\sigma_j^2)=\mathrm{van}(\Sigma^2).
                \end{equation}

      \end{proof}

\section{Dynamics of matrix completion}
\label{sec:matrixcompletion}    
This section is devoted to the gradient dynamics of the DLN for matrix completion. We consider quadratic energy functions as in equation~\eqref{loss} and equation~\eqref{eq:diagonalenergy}. As $\mathcal{B}$ varies, the energy $E_\mathcal{B}$ may have a unique minimum or a submanifold of minima. By equation~\eqref{eq:gradient-flow} the dynamics of the DLN is determined by an interplay between the Riemannian geometry and the nature of $E_{\mathcal{B}}(W)$. 

This section primarily focuses on describing numerical experiments. However, in order to generate efficial numerical simulations, it is necessary to first express the dynamics in SVD coordinates. These expressions are summarized in Theorem~\ref{svddynamicsgeneral}. We also prove Theorem~\ref{thm:nhim} on normal hyperbolicity of submanifolds of equilibria to shed light on the attraction to the balanced manifold.

The main themes in the numerical experiments are as follows. First, we consider the variation with depth $N$, in order to demonstrate the utility of the infinite depth limit. Second, we construct energies where low rank heuristics are insufficient to explain the accumulation of training outcomes. Instead, we demonstrate that state space volume (as measured by the intrinsic Riemannian metric $g^N$) is a better predictor of the training outcome. 

\subsection{Numerical integration of equation~\eqref{eq:gradient-flow} using SVD}
\label{subsec:svddynamics}

Numerically integrating \eqref{eq:gradient-flow} gets very costly with increasing depth, due to the large number of matrix powers needed. Alternatively, one can use the factorized formula \eqref{metricmatrix} to compute the Riemannian gradient through the dual metric:
\begin{equation}
\label{eq:generalgradientflow}
    \dot w=-g^{N*}(w) \partial E(w).
\end{equation}

This formulation, however, requires the SVD of $W$ at every time step.  Instead, we compute the SVD of the initial condition and directly evolve the singular coordinates using smooth singular value decomposition. All numerical results in this paper are obtained using the fourth order, fixed timestep Runge-Kutta method. This formulation is stated in Theorem \ref{svddynamicsgeneral}, building on the well known result Lemma~\ref{smooth_svd}, that we review for completeness.

  \begin{lemma}[Smooth SVD]
    \label{smooth_svd}
      Given a smooth curve $W(t): (t_1,t_2)\rightarrow GL(d)$, $W(t)$ having distinct singular values for all $t \in (t_1,t_2)$, a smooth singular value decomposition $W(t)=U(t) \Sigma(t) V(t)^T$ exists satisfying the following system of differential equations:
      \begin{equation}
        \dot \sigma_i=u_i^T \dot W v_i
        \label{eq:svperturbation}
      \end{equation}

      \begin{equation}
        \dot u_i =\sum_{j\neq i} \frac{1}{\sigma^2_i-\sigma^2_j} \langle (\dot W W^T +W \dot W^T) u_i, u_j \rangle u_j
      \end{equation}

      \begin{equation}
        \dot v_i =\sum_{j\neq i} \frac{1}{\sigma^2_i-\sigma^2_j} \langle (\dot W^T W +W^T \dot W) v_i, v_j \rangle v_j
      \end{equation}
    \end{lemma}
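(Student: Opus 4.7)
The plan is to first establish existence of the smooth SVD and then derive each ODE by differentiating the defining relations $\sigma_i = u_i^T W v_i$, $WW^T u_i = \sigma_i^2 u_i$, and $W^T W v_i = \sigma_i^2 v_i$, exploiting the orthonormality of $\{u_i\}$ and $\{v_i\}$.

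For existence, I would observe that $WW^T$ and $W^TW$ are smooth curves of symmetric positive-definite matrices whose eigenvalues $\sigma_i^2(t)$ are distinct throughout $(t_1,t_2)$ by hypothesis. Standard analytic perturbation theory for symmetric matrices with simple spectrum (Rellich/Kato) yields smooth eigenpairs $(\sigma_i^2(t), u_i(t))$ of $WW^T$ and smooth $v_i(t)$ of $W^TW$ with $\|u_i\|=\|v_i\|=1$. For each $i$, the vector $W v_i/\sigma_i$ equals $\pm u_i$, and since it varies smoothly in $t$ on a connected interval, the sign is constant and can be chosen to be $+$ at $t=0$ and hence on all of $(t_1,t_2)$. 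This furnishes $W(t)=U(t)\Sigma(t)V(t)^T$ smoothly.

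For the singular value ODE I would differentiate $\sigma_i = u_i^T W v_i$. The terms $\dot u_i^T W v_i$ and $u_i^T W \dot v_i$ reduce to $\sigma_i \dot u_i^T u_i$ and $\sigma_i v_i^T \dot v_i$ respectively, both of which vanish because $u_i$ and $v_i$ have constant unit norm. This leaves $\dot\sigma_i = u_i^T \dot W v_i$. For the left singular vectors, differentiate $WW^T u_i = \sigma_i^2 u_i$ to get
\begin{equation*}
(\dot W W^T + W \dot W^T)u_i + WW^T \dot u_i = 2\sigma_i\dot\sigma_i u_i + \sigma_i^2 \dot u_i.
\end{equation*}
Taking the inner product with $u_j$ for $j\neq i$ and using self-adjointness to move $WW^T$ onto $u_j$ (yielding $\sigma_j^2 \langle \dot u_i, u_j\rangle$) gives
\begin{equation*}
\langle \dot u_i, u_j\rangle = \frac{1}{\sigma_i^2 - \sigma_j^2}\langle (\dot W W^T + W\dot W^T)u_i, u_j\rangle.
\end{equation*}
Since $\langle \dot u_i, u_i\rangle = 0$, expanding $\dot u_i$ in the basis $\{u_j\}$ yields exactly the claimed formula. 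The identity for $\dot v_i$ is obtained by the same argument applied to $W^T W v_i = \sigma_i^2 v_i$.

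The only real subtlety is the smoothness step: perturbation of eigenvectors generically loses regularity when eigenvalues cross, which is why the hypothesis of distinct singular values is essential. Once this is in hand, the remaining ODEs are purely algebraic manipulations of differentiated identities and present no further obstacle.
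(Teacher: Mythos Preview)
Your proposal is correct and follows precisely the ``direct computation'' the paper alludes to; the paper's own proof is a one-line deferral (``Under our assumptions these formulas can be verified by direct computation'') together with a citation for the delicate repeated-singular-value case, whereas you actually carry out the computation and supply an existence argument via Rellich--Kato perturbation theory. The two approaches are therefore the same in spirit, with yours being the fleshed-out version.
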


    \begin{proof}
        Under our assumptions these formulas can be verified by direct computation. For a more careful treatment for dealing with repeated singular values see \cite{analyticsvd}.
    \end{proof}
    
    Using Lemma \ref{smooth_svd} and equation \eqref{eq:generalgradientflow}, we can write down the evolution equations for the singular coordinates $U(t) \Sigma(t) V^T(t)=W(t)$ directly. For $N<\infty$, this result is equivalent to statements in \cite{arora2019implicit}(see Thm 3 and Lemma 2). Let $\mathfrak{s}\left(M\right):=M-M^T$, $\alpha=1-1/N$ and $\circ$ denote the Hadamard (elementwise) product. For $N \leq \infty$ introduce the matrix:
    \begin{equation}
        L_N^{il}=
        \begin{cases}
            \frac{\lambda^N_{il}}{\sigma_i^2-\sigma_l^2},\text{ for }i\neq l\\
            0\text{ otherwise.}
        \end{cases}
    \end{equation}

    \begin{theorem}\label{svddynamicsgeneral}
        Under the assumptions of Lemma \ref{smooth_svd}, and differentiable loss function $E$, the singular value decomposition of the end-to-end matrix evolve according to the differential equations:
     \begin{align}
        \dot U &= U \mathfrak{s}\left(\left(L_N\left(\Sigma\right) \Sigma\right)\circ\left(U^T \partial_W E\, V\right)\right)\\
        \dot \Sigma &= -\Sigma^{2\alpha} \mathrm{diag} \left(U^T \partial_W E\, V\right)\label{eq:svdynamics}\\
        \dot V &= V \mathfrak{s}\left(\left( \Sigma L_N\left(\Sigma\right)\right)\circ\left(U^T \partial_W E\, V\right)\right).
    \end{align}
    \end{theorem}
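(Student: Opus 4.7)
\medskip

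\noindent\textbf{Proof plan for Theorem~\ref{svddynamicsgeneral}.} The strategy is to express the Riemannian gradient flow~\eqref{eq:generalgradientflow} in the eigenbasis of $\mathcal{A}_{N,W}$ supplied by Lemma~\ref{diagonalization}, then to feed the resulting formula for $\dot W$ into the first-order SVD perturbation identities of Lemma~\ref{smooth_svd}. Throughout, write $P:=U^T\partial_W E\, V$ and let $\lambda^N$ denote the $d\times d$ matrix whose $(i,l)$ entry is the eigenvalue $\lambda_{il}^N$ from~\eqref{eq:eig1}. Because the dual metric has the same eigenvectors $T_{il}=UE_{il}V^T$ as $\mathcal{A}_{N,W}$ with reciprocal eigenvalues to those of $g^N$ in~\eqref{eq:dn-il}, equation~\eqref{eq:generalgradientflow} rewrites as
\begin{equation*}
\dot W \;=\; -\mathcal{A}_{N,W}(\partial_W E) \;=\; -U\bigl(\lambda^N\circ P\bigr)V^T.
\end{equation*}
This is the single key identity; everything else follows from plugging it into Lemma~\ref{smooth_svd}.

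\medskip

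For the singular value equation, I would compute
\begin{equation*}
\dot\sigma_i \;=\; u_i^T\dot W v_i \;=\; -\lambda^N_{ii}\,P_{ii},
\end{equation*}
and then observe that the diagonal eigenvalues from~\eqref{eq:eig1} collapse to $\lambda^N_{ii}=(\sigma_i^2)^{(N-1)/N}=\sigma_i^{2\alpha}$, giving $\dot\Sigma=-\Sigma^{2\alpha}\,\mathrm{diag}(P)$. For $\dot U$ and $\dot V$, the plan is to substitute $\dot W=-U(\lambda^N\circ P)V^T$ into $\dot W W^T+W\dot W^T$ and $\dot W^T W+W^T\dot W$, yielding
\begin{equation*}
\dot W W^T+W\dot W^T \;=\; -U\bigl[(\lambda^N\circ P)\Sigma+\Sigma(\lambda^N\circ P)^T\bigr]U^T,
\end{equation*}
and a symmetric expression for the $V$-side. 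Taking the $(j,i)$ matrix element against $u_j,u_i$ and using $\lambda^N_{ij}=\lambda^N_{ji}$ produces the scalar $-\lambda^N_{ij}(\sigma_i P_{ji}+\sigma_j P_{ij})$, and dividing by $\sigma_i^2-\sigma_j^2$ as required by Lemma~\ref{smooth_svd} introduces exactly the factor $L_N^{ij}$.

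\medskip

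To package the result into the stated matrix form, I would write $\dot U=UA^U$ with $A^U_{ji}=-L_N^{ij}(\sigma_i P_{ji}+\sigma_j P_{ij})$ and then identify this with $\mathfrak{s}\bigl((L_N\Sigma)\circ P\bigr)$: the Hadamard product and the skew-symmetrization $\mathfrak{s}(M)=M-M^T$ combine the $\sigma_j P_{ij}$ and $\sigma_i P_{ji}$ contributions with the correct signs, using the antisymmetry $L_N^{ji}=-L_N^{ij}$ inherited from the symmetry of $\lambda_{ij}^N$ in $(i,j)$. A verbatim calculation gives $\dot V=V\mathfrak{s}\bigl((\Sigma L_N)\circ P\bigr)$, with the order of $\Sigma$ and $L_N$ flipped because $W^T$ appears on the left in the identity for $\dot v_i$.

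\medskip

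There is no serious obstacle: the proof is a direct computation, and the only delicate point is bookkeeping of indices and signs when verifying that the antisymmetric matrices $A^U$ and $A^V$ constructed from Lemma~\ref{smooth_svd} coincide with the two skew-symmetrized Hadamard products in the theorem. The concavity-free identity $L_N^{ij}=\lambda_{ij}^N/(\sigma_i^2-\sigma_j^2)$ is the algebraic glue that makes the pole at $\sigma_i=\sigma_j$ explicit but removable when combined with the smooth SVD formulas, consistent with Lemma~\ref{smooth_svd}'s blanket assumption of distinct singular values.
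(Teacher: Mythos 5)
Your proposal is correct and is exactly the "direct computation using \eqref{eq:generalgradientflow} and Lemma \ref{smooth_svd}" that the paper's proof invokes (the paper simply cites Arora et al.\ for finite $N$ and asserts the computation for $N=\infty$, whereas you carry it out explicitly and uniformly in $N\leq\infty$ via the eigendecomposition $\dot W=-U(\lambda^N\circ P)V^T$ from Lemma~\ref{diagonalization}). The index/sign bookkeeping you flag indeed checks out: with $L_N^{ji}=-L_N^{ij}$, the skew-symmetrized Hadamard products reproduce the coefficients $-L_N^{ij}(\sigma_iP_{ji}+\sigma_jP_{ij})$ and $-L_N^{ij}(\sigma_iP_{ij}+\sigma_jP_{ji})$ for $A^U$ and $A^V$, and $\lambda^N_{ii}=\sigma_i^{2\alpha}$ gives \eqref{eq:svdynamics}.
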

    
    \begin{proof}
       For $N<\infty$, see Theorem 3 and Lemma 2 in \cite{arora2019implicit}. Under infinite depth, direct computation using \ref{eq:generalgradientflow} and Lemma \ref{smooth_svd} verifies the result.
    \end{proof}

    Recall that the state space of the Riemannian gradient flow is $GL(d)$. Since this is a dense, open subset of the space of all quadratic matrices of given size, any lower rank matrix can be approximated in this space. An appropriate way to quantify the rank of a matrix that is stable under numerical approximations is the following.

    \begin{definition}[Effective rank]
        Let $W \in GL(d)$ with singular values $\sigma_i$ and $s_i$ be its normalized singular values
        \begin{equation}
            s_i=\frac{\sigma_i}{\sum_j \sigma_j}.
        \end{equation}
        Then the effective rank of $W$ is
        \begin{equation}
            r_e\left(W\right)=\exp\left({-\sum_i s_i \log\left(s_i\right)}\right).
        \end{equation}
    \end{definition}

\subsection{Attraction rates and the proof of Theorem \ref{thm:nhim}}
\label{sec:attraction}

We derive bounds on the normal attraction rates for the submanifold of equilibria $\mathcal{N}_\mathcal{B}$. Given a choice of $\mathcal{B}$, let $\mathcal{I}$ denote the vectorized index set of observed elements, that is
\begin{equation}
    \text{vec} \left( \mathcal{B}\right)_i = \begin{cases}
        1 \text{ if } i\in \mathcal{I} \\ 0 \text{ otherwise.}
    \end{cases}
\end{equation}

In the following, we compute the linearization of equation~\eqref{eq:gradient-flow} in the normal direction of $\mathcal{B}_\mathcal{N}$. We use coordinates $W=\mathcal{B} \circ \Phi +X+Y$, where $\mathcal{B} \circ X=X$ and $\mathcal{B} \circ Y=0$. Lowercase $x,y$ and $w$ denote the vectorization of these coordinates. The dual metric tensor (the matrix of which is represented by the inverse of $g^{N}$) is denoted $g^{N*}$. Let $W_0 \in \mathcal{N}_\mathcal{B}$, be a fixed point.

\begin{lemma}
\label{lem:rates}
At $W_0$, the linearization of equation~\eqref{eq:gradient-flow} in the normal direction $x$ is given by 
\begin{equation}
    \dot x= -A    x,
\end{equation}
where $A$ is a principle submatrix of $g^{N*}$, consisting of rows and columns of indices observed by $\mathcal{B}$:
\begin{equation}
    A=\left[\left. g^{N*}  \right\vert_{W_0} \right]_{i,j \in \mathcal{I}}.
\end{equation}

\end{lemma}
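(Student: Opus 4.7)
The plan is to proceed in three short steps: compute the Euclidean gradient $\partial_W E_\mathcal{B}$ in the coordinate splitting $W = \mathcal{B}\circ\Phi + X + Y$, substitute it into the Riemannian gradient flow~\eqref{eq:generalgradientflow}, and then read off the piece supported on the normal direction $x$.

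For the first step, I would use that $\mathcal{B}$ has entries in $\{0,1\}$ so that $\mathcal{B}\circ\mathcal{B} = \mathcal{B}$, together with the identifications $\mathcal{B}\circ X = X$ and $\mathcal{B}\circ Y = 0$, to obtain
\[
\partial_W E_\mathcal{B}(W) \;=\; \mathcal{B}\circ(W - \Phi) \;=\; \mathcal{B}\circ W - \mathcal{B}\circ\Phi \;=\; X.
\]
Thus the vectorized gradient $\partial E(w) = x$ is automatically supported on the observed index set $\mathcal{I}$, and it vanishes at $W_0$. Substituting into $\dot w = -g^{N*}(W)\,\partial E(w)$ and projecting onto the coordinate subspace of observed entries yields
\[
\dot x \;=\; -P_\mathcal{I}\, g^{N*}(W)\, x,
\]
where $P_\mathcal{I}$ denotes projection onto the coordinates indexed by $\mathcal{I}$.

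To linearize around $W_0$, I would freeze the metric at its value $g^{N*}|_{W_0}$; the variation $g^{N*}(W) - g^{N*}|_{W_0}$ multiplies the already-small factor $x$, and so contributes only at quadratic order in the normal displacement. Since $x$ is itself supported on $\mathcal{I}$, the map $x \mapsto P_\mathcal{I}\, g^{N*}|_{W_0}\, x$ is determined entirely by the rows and columns of $g^{N*}|_{W_0}$ indexed by $\mathcal{I}$, that is, by the principal submatrix $A$ in the statement. This delivers $\dot x = -A x$.

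The only subtle point, and the main thing I would need to be careful about, is justifying the quadratic-error step cleanly: because $\partial E(W_0) = 0$ and $g^{N*}$ is smooth (or at least continuous) on an open neighborhood of $W_0$ in $GL(d)$, the remainder $g^{N*}(W)\,\partial E(W) - g^{N*}|_{W_0}\,x$ is $o(\|W - W_0\|)$ and so does not enter the linearization. Once this is checked, identifying $A$ as the indicated principal submatrix is immediate from the support constraints on $x$, and the lemma follows.
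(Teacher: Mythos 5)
Your proposal is correct and follows essentially the same route as the paper: linearize $\dot w=-g^{N*}(w)\,\partial E(w)$ at the equilibrium $W_0$, observe that the variation of the metric drops out because $\partial E(W_0)=0$, and identify the resulting map on the observed coordinates with the principal submatrix $A$. The only cosmetic difference is that you exploit the exact identity $\partial_W E_\mathcal{B}(W)=X$ (since $E_\mathcal{B}$ is quadratic), whereas the paper phrases the same computation via the product rule and the Hessian $\partial^2 E(W_0)$, which is the diagonal projector onto $\mathcal{I}$.
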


\begin{proof}
    Using the general form of the Riemannian gradient flow \eqref{eq:generalgradientflow} and taking a derivative
  \begin{equation}
      \frac{d}{d\epsilon} g^{N*}(\epsilon x,y) \partial E_I((\epsilon x,y))=\left(\partial_x g^{N*}\right) \partial E_I(w_0)+g^{N*}(w_0) \partial^2 E(w_0) x,
  \end{equation}
  we notice that the first term is zero since $\partial_W E(W_0)=0$ by $W_0$ being an equilibrium of the gradient flow. Computing
  \begin{equation}
      \partial^2 E_i(W_0)_{ij}=\begin{cases}
          1, \text{ if }i=j, i \in \mathcal{I}\\ 0 \text{ otherwise,}
      \end{cases}
  \end{equation}
  we see that multiplication by this matrix results in the above principle submatrix of $g^{N*}$.
\end{proof}

\begin{proof}[Proof of Theorem \ref{thm:nhim}]
     We show a nonzero lower bound on the attraction rates. Let $\{\alpha_i\}_1^d$, $\alpha_i\leq \alpha_j$ if $j\leq i$ denote the eigenvalues of $A$. Then by the Cauchy interlacing theorem and Lemma \ref{lem:spectrumordering},
    \begin{equation}
        \sigma_d^2\leq\alpha_d\leq \alpha_{d-1} \leq \dots \leq \alpha_1,
    \end{equation}
    and note that this lower bound is nonzero on any compact subset on $\mathcal{N}_\mathcal{B}$.
\end{proof}

Notice that for Lemma \ref{lem:spectrumordering} and the Cauchy interlacing theorem completely characterizes the order of $\lambda^\infty_{ij}$ and the characteristic exponents of the $d=2$ case:
\begin{equation}
    \sigma_2^2 \leq \alpha_2 \leq\frac{\sigma_1^2-\sigma_2^2}{\log \sigma_1^2-\log \sigma_2^2} \leq \alpha_1  \leq \sigma_1^2.
    \label{eq:boundsonrates}
\end{equation}

\subsection{Diagonal matrix completion}

In this section we show numerical simulations for the energy function $E_I$ under variable $N$ and $d$. For this choice of energy function, the rank of possible completions ranges from $1$ to $d$, and so it constitutes one of the important cases for studying bias towards low rank in matrix completion problems.



\subsubsection{Example: $d=20$}
\label{sec:wideexample}

We start with simulations of a larger, $d=20$, example. Figures \ref{depth10Infmtrxcompl}-\ref{depth3mtrxcompl} show histograms of effective rank of optimization outcomes. Note that the rank and thus the effective rank here can be as large as $20$, the size of the matrices and therefore bias towards low rank could mean convergence to a matrix of any effective rank smaller than $20$.

Notice that even the shallowest, $N=3$ case shows strong bias towards low rank completions. In case of $N=10$ and $N=\infty$, the obtained histograms are nearly identical, showing strong bias towards minimal, rank one outcomes. This suggests that building intuition around the dynamics under sufficient depth is possible using small, $d=2$ or $d=3$ examples, in which case the rank deficient cases \eqref{eq:gradflow1} are easier to characterize. 

Approximately 300 optimization outcomes are included for each $N$. Panel (b)  of Figure \ref{depth3mtrxcompl} shows the distribution of effective rank upon initialization. The small random initial conditions are drawn from a Wigner ensemble, specifically they consist of matrices of independent normal entries of mean zero and standard deviation $0.001$, which distribution is denoted $\text{Wigner}(0,0.001)$).

Numerical convergence criterion used for these examples is $E_I(W)< 10^{-6}$.  

\begin{figure}[H]
\begin{subfigure}{.48\textwidth}
  \centering
  \includegraphics[width=1.0\linewidth]{./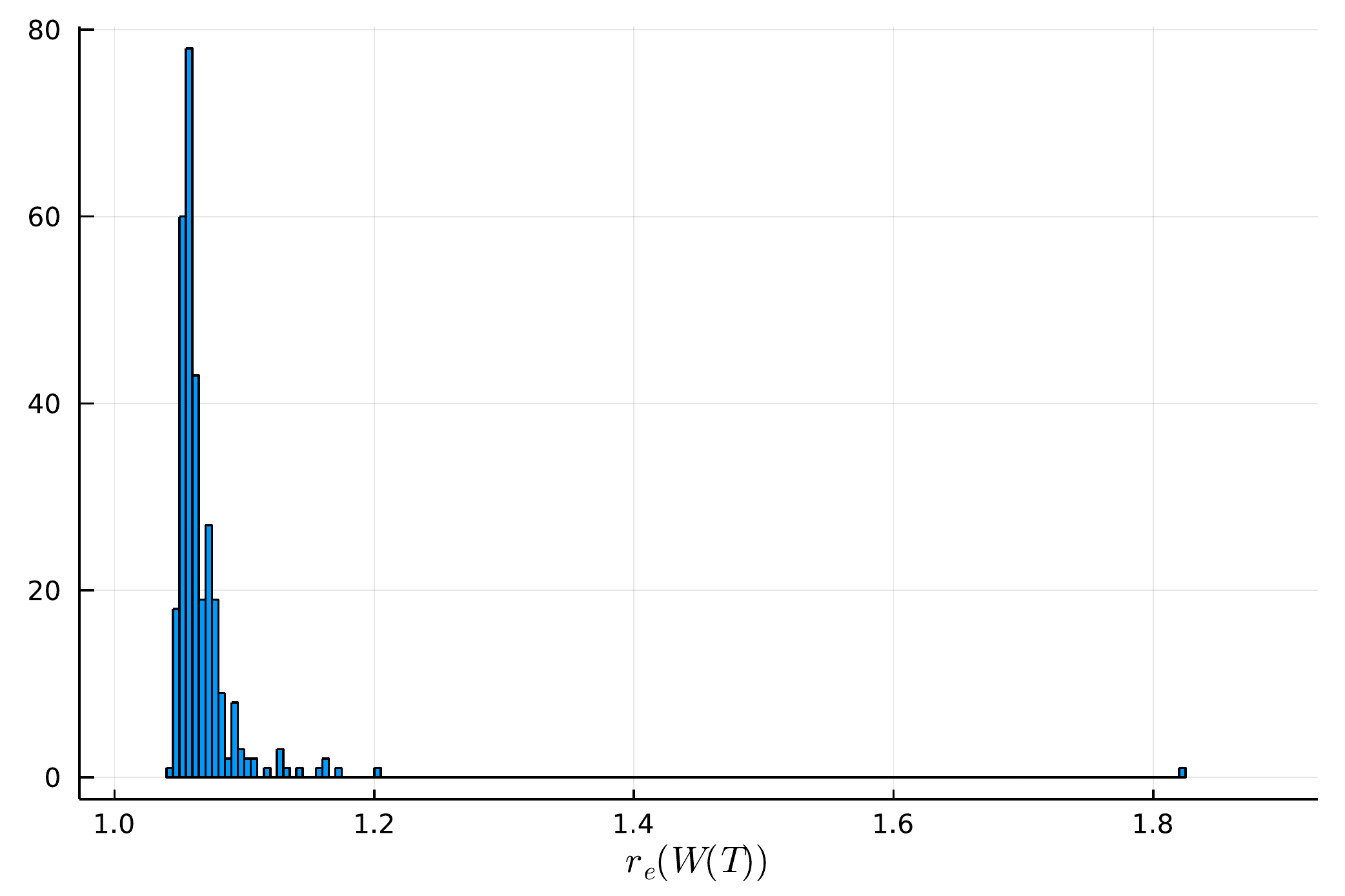}  
  \caption{$N=10$}
  
\end{subfigure}
\hspace{0.02\textwidth}
\begin{subfigure}{.48\textwidth}
  \centering
  \includegraphics[width=1.0\linewidth]{./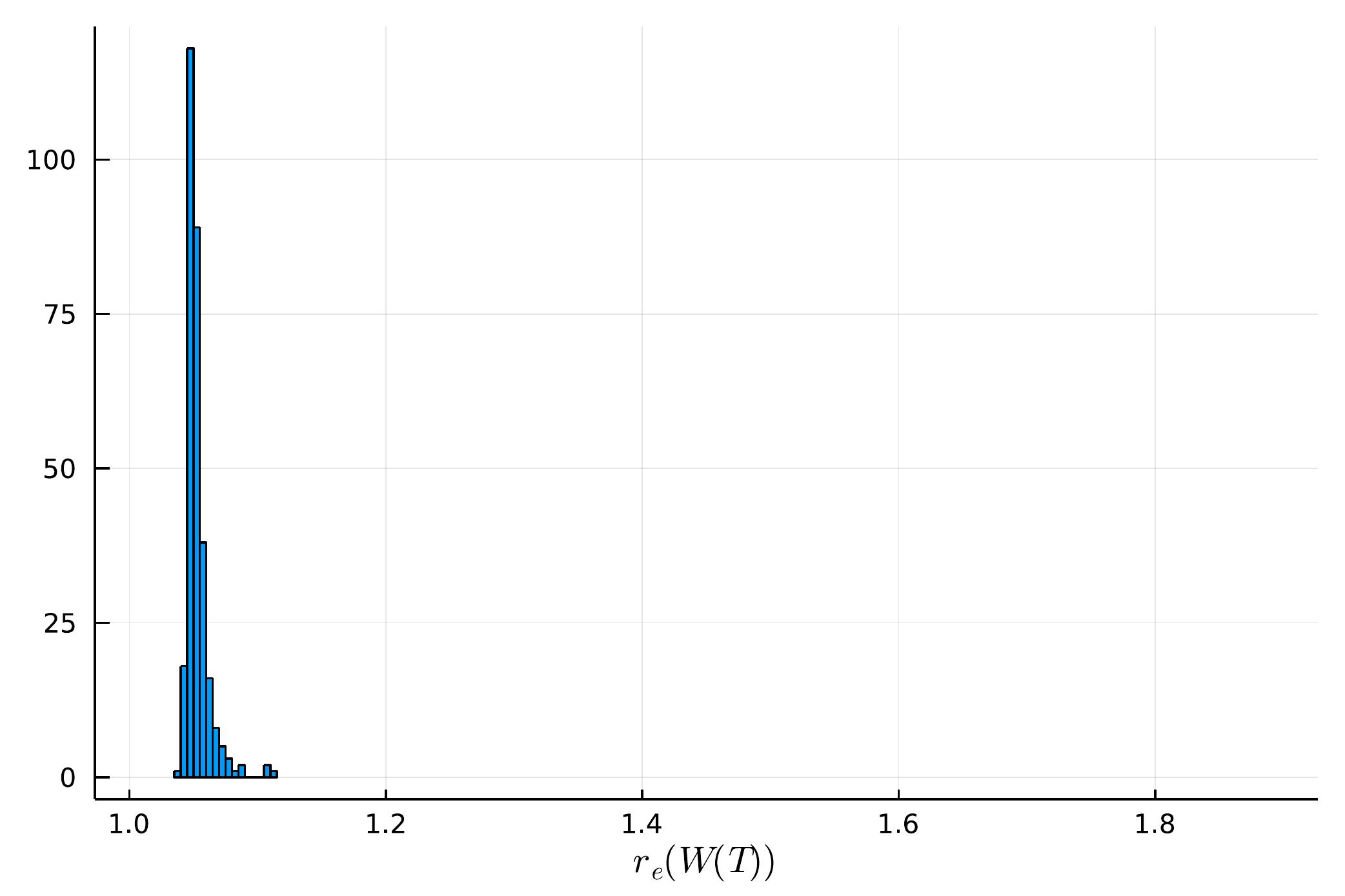}  
  \caption{$N=\infty$}
  
\end{subfigure}
\caption{Empirical distributions of effective rank in $20\times 20$ matrix completion simulations.}
\label{depth10Infmtrxcompl}
\end{figure}

\begin{figure}[H]
\begin{subfigure}{.48\textwidth}
  \centering
  \includegraphics[width=1.0\linewidth]{./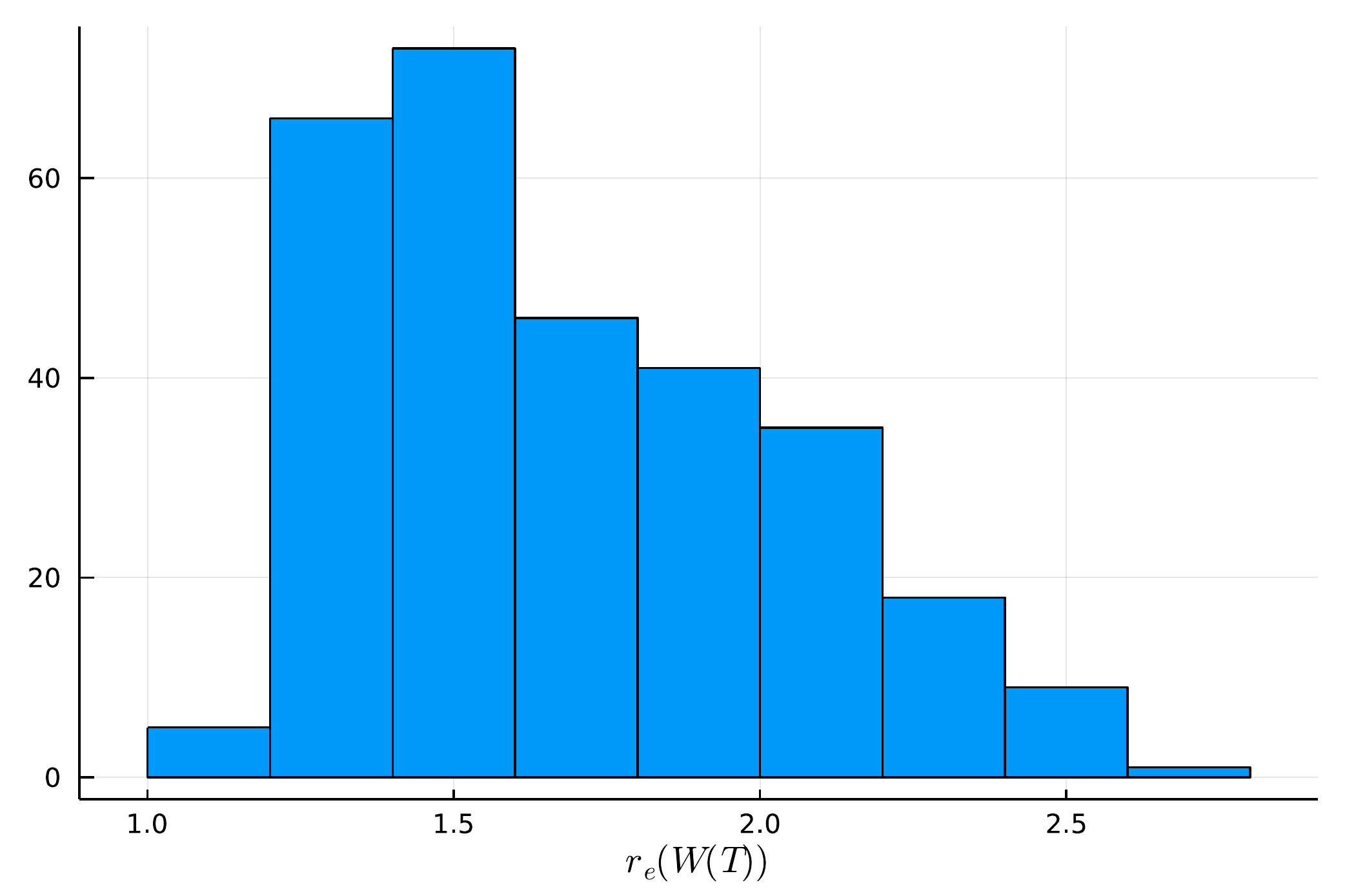}  
  \caption{$N=3$}
  
\end{subfigure}
\hspace{0.02\textwidth}
\begin{subfigure}{.48\textwidth}
  \centering
  \includegraphics[width=1.0\linewidth]{./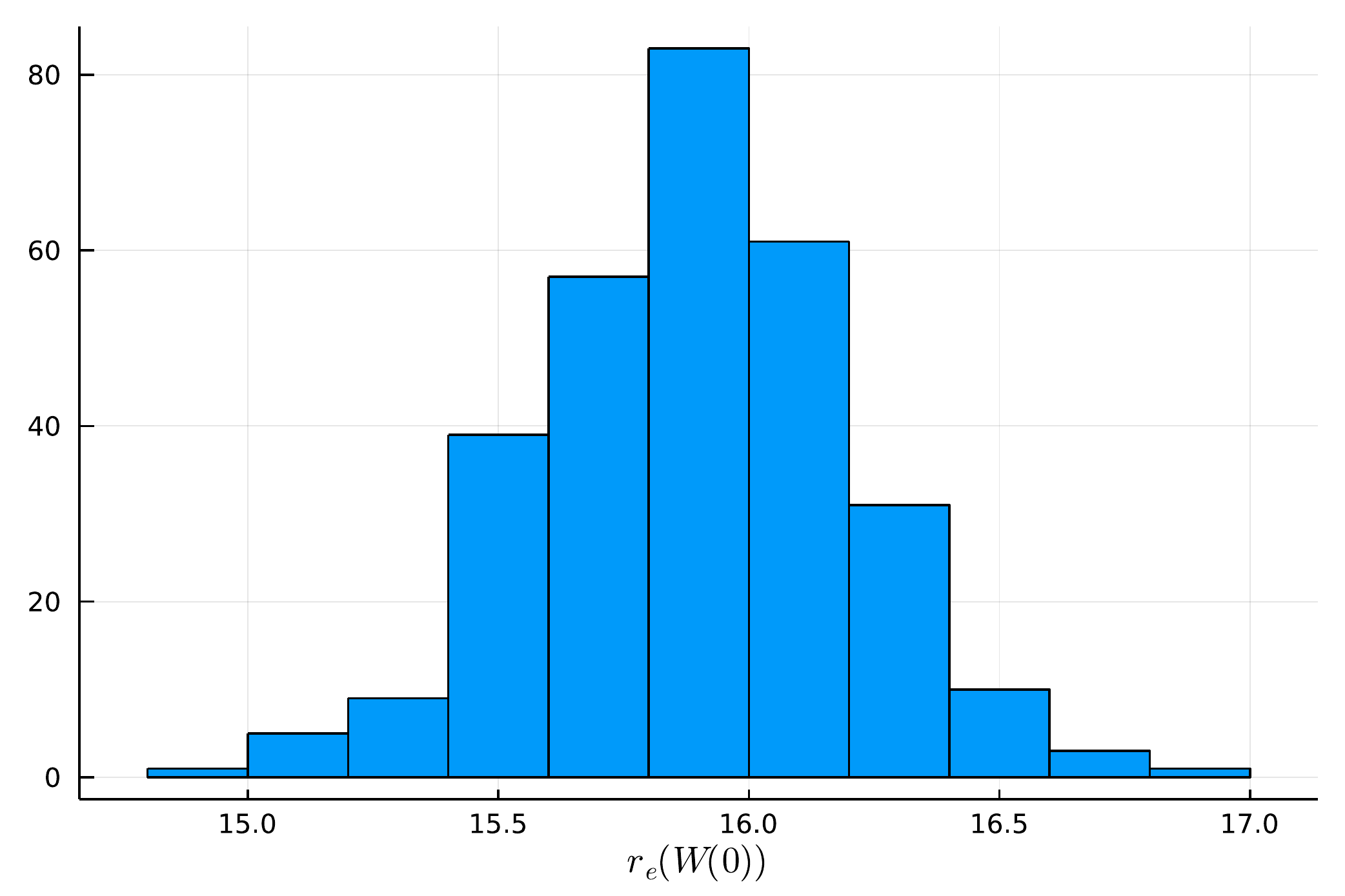}  
  \caption{Initialization}
  
\end{subfigure}
\caption{(a) Effective rank distributions of shallow matrix completion simulations. (b) Sample distribution of effective rank at initialization for all above examples.}
\label{depth3mtrxcompl}
\end{figure}

\subsubsection{Example: $d=2$}
\label{hyperbolas}
In the following example for matrix completion, we illustrate how an increase in depth influences training outcome and at what depth do the results start to closely resemble our infinite depth limit. For easier visualization and building on the intuition developed in the previous section, we keep the width at the minimum $d=2$. The results shown in Figure \ref{fig:diagonalmatrixcompletion5Inf} are obtained by approximately 3000 runs for $N=5,10,20$ and infinity each. Each of these figures show the outcome of these batch simulations. 

For all of these batch runs, the optimization objective is a fixed invertible matrix of diagonal elements $[0.58724; 1.447]$. The small random initial conditions are drawn from $\text{Wigner}(0,0.001)$\footnote{The Wigner ensemble $\text{Wigner}(\mu,\sigma)$ is a distribution of random matrices with independent normal elements of mean $\mu$ and standard deviation $\sigma$.}. Note that initializing the end-to-end matrix directly does not lead to the same distribution as initializing individual layers a similar way and then taking a product. However, for the purposes of this example, these two approaches lead to identical results.

Notice that here any matrix with fixed diagonal elements
\begin{equation}
    \begin{bmatrix}
        \Phi_1 & w_{12} \\ w_{21} & \Phi_2
    \end{bmatrix}
\end{equation}
is a global minimizer. Batch simulations allow us to see whether there is a concentration of training outcomes on the $w_{12},w_{21}$ plane of possible global minimizers. Figure \ref{fig:diagonalmatrixcompletion5Inf} show the results of these simulations, and Figure \ref{fig:logvolumeforms} show the clear correspondence to high phase space volume. The hyperbola on the left pane correspond to the rank one singularities on the manifold defined by
\begin{equation}
    \frac{\Phi_1}{w_{21}}=\frac{w_{12}}{\Phi_2}.
\end{equation}
At these singularities phase space volume blows up for any depth, and the probability of landing in this high volume region is expected to be high. The logarithmic volume can be interpreted as an entropic quantity \cite{Govind_SDP}, suggesting the use of statistical mechanical tools in our future analysis. 

Numerical convergence criterion is $E(W)< 10^{-15}$ which is achieved in no more than $T=1200$ simulation time in all included examples.


\begin{figure}[h!]
\begin{subfigure}{.48\textwidth}
  \centering
  \includegraphics[width=1.0\linewidth]{./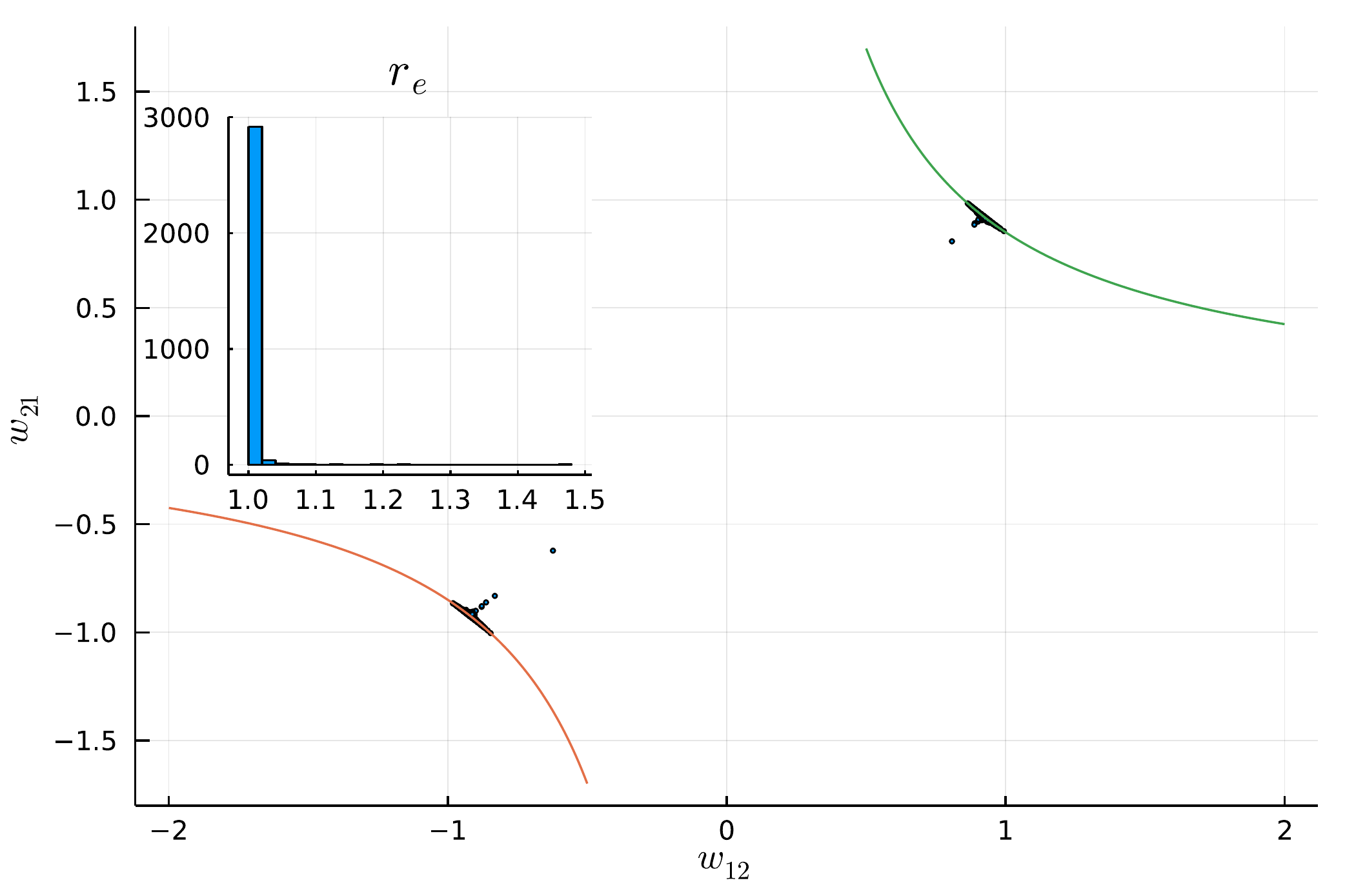}  
  \caption{$N=5$}
  
\end{subfigure}
\hspace{0.02\textwidth}
\begin{subfigure}{.48\textwidth}
  \centering
  \includegraphics[width=1.0\linewidth]{./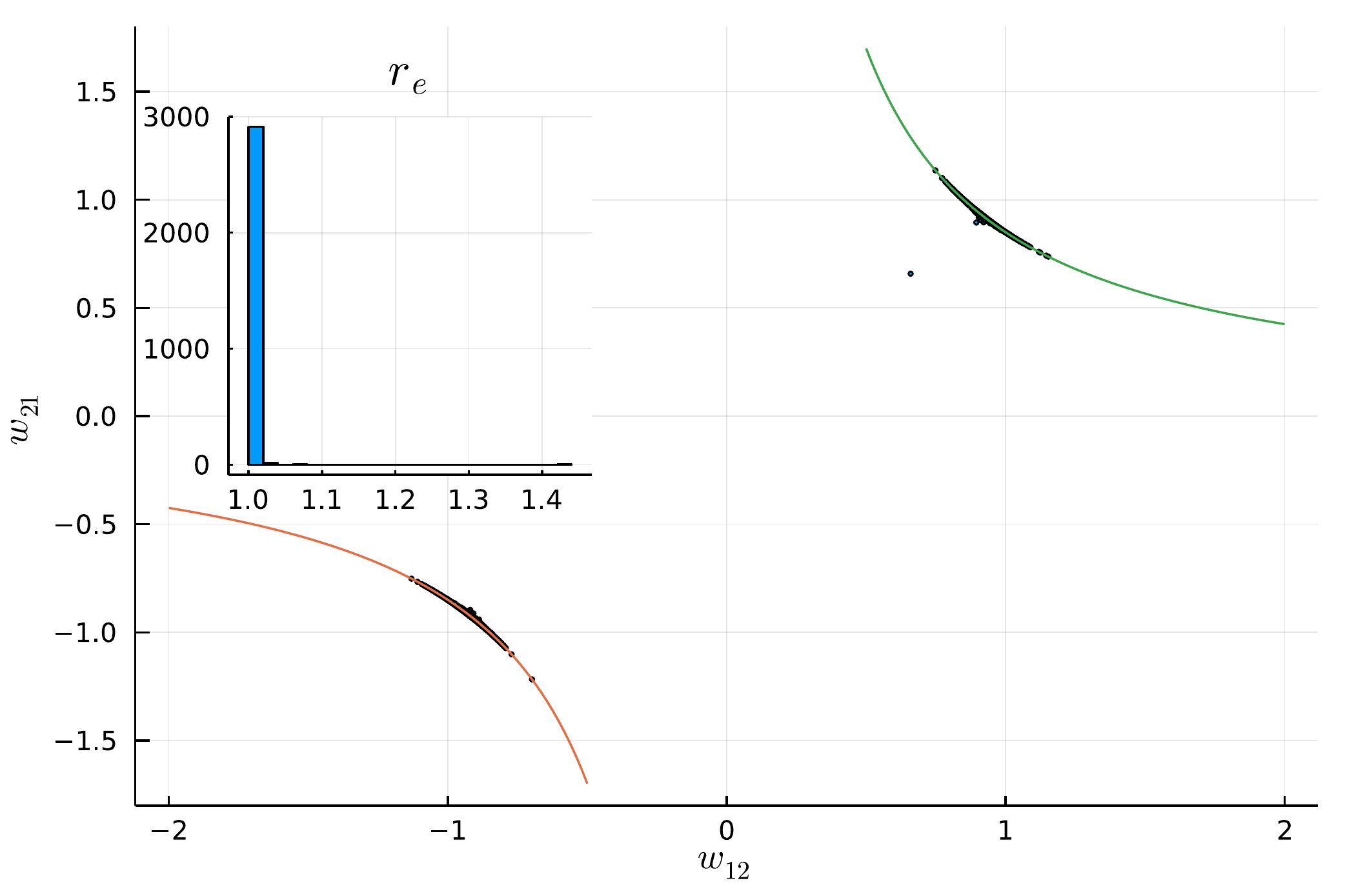}  
  \caption{$N=10$}
  
\end{subfigure}

\begin{subfigure}{.48\textwidth}
  \centering
  \includegraphics[width=1.0\linewidth]{./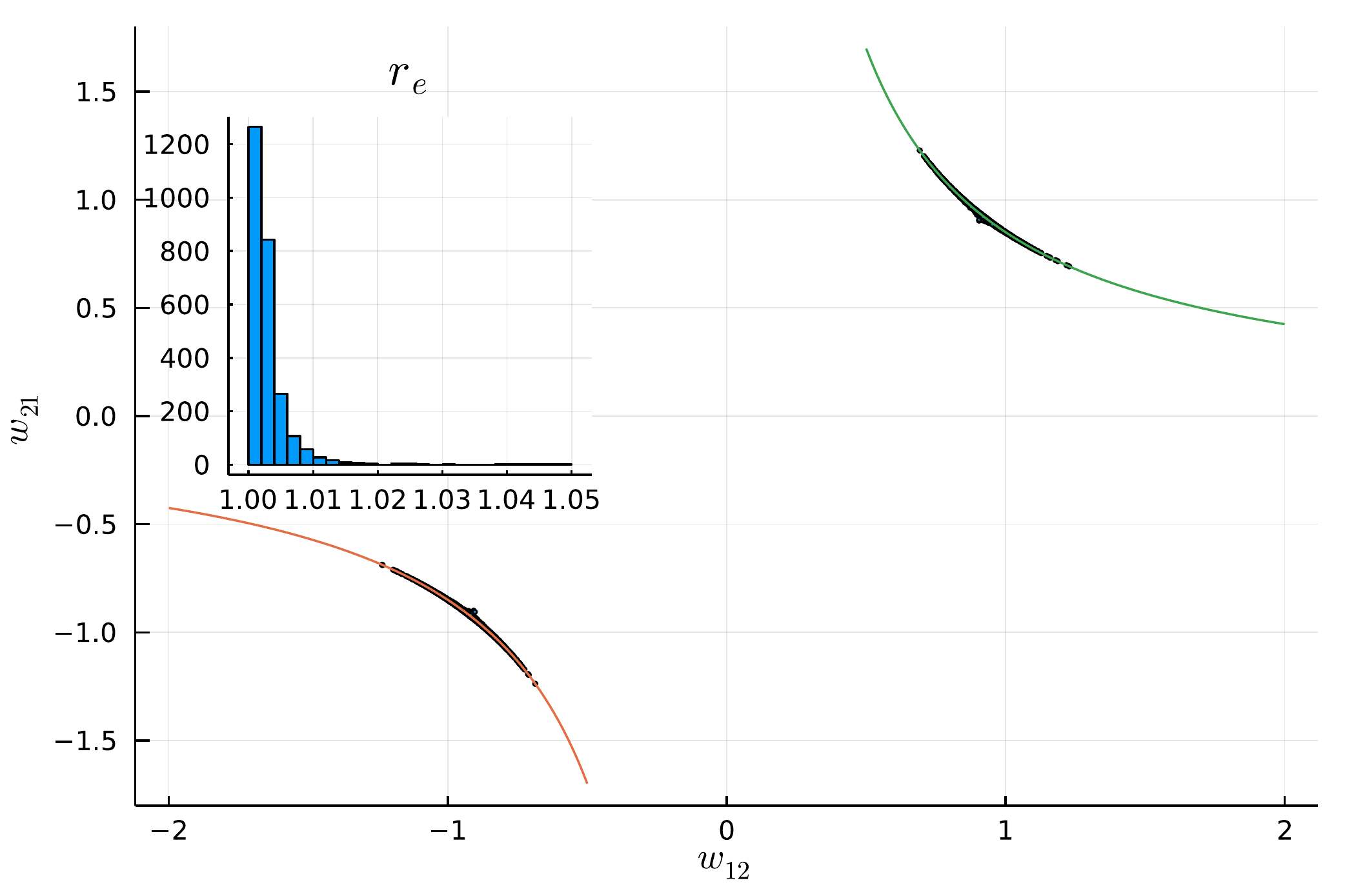}  
  \caption{$N=20$}
  
\end{subfigure}
\hspace{0.02\textwidth}
\begin{subfigure}{.48\textwidth}
  \centering
  \includegraphics[width=1.0\linewidth]{./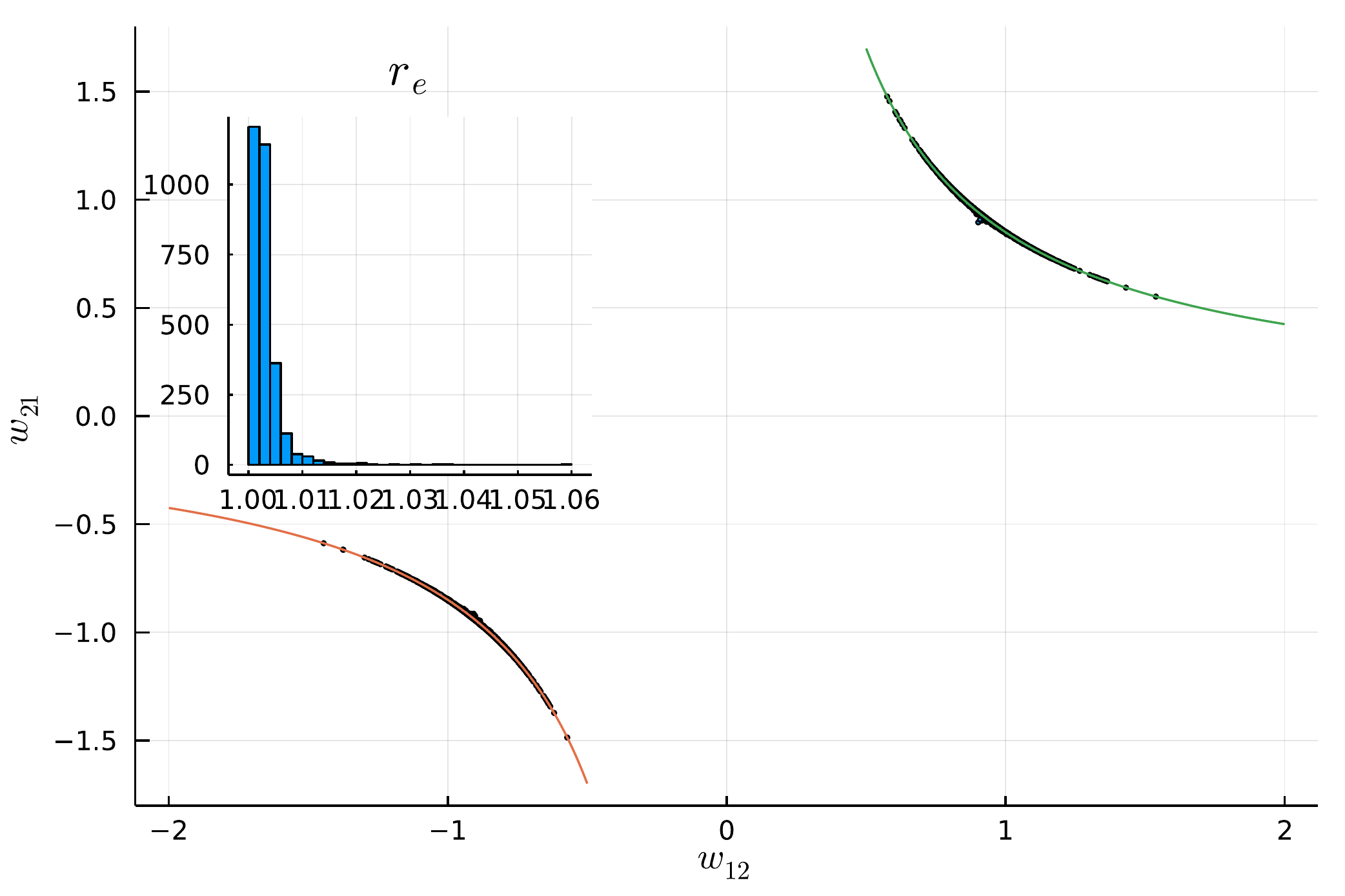}  
  \caption{$N=\infty$}
  
\end{subfigure}
\caption{Outcomes of batch simulations under $N=5,10,20$ and $\infty$, respectively. The black dots indicate training outcomes, while the red and green hyperbola lobes visualize rank one minimizers. Sample distributions of effective rank are also included in the embedded graphs.}
\label{fig:diagonalmatrixcompletion5Inf}
\end{figure}

\begin{figure}[h!]
\begin{subfigure}{.48\textwidth}
  \centering
  \includegraphics[width=1.0\linewidth]{./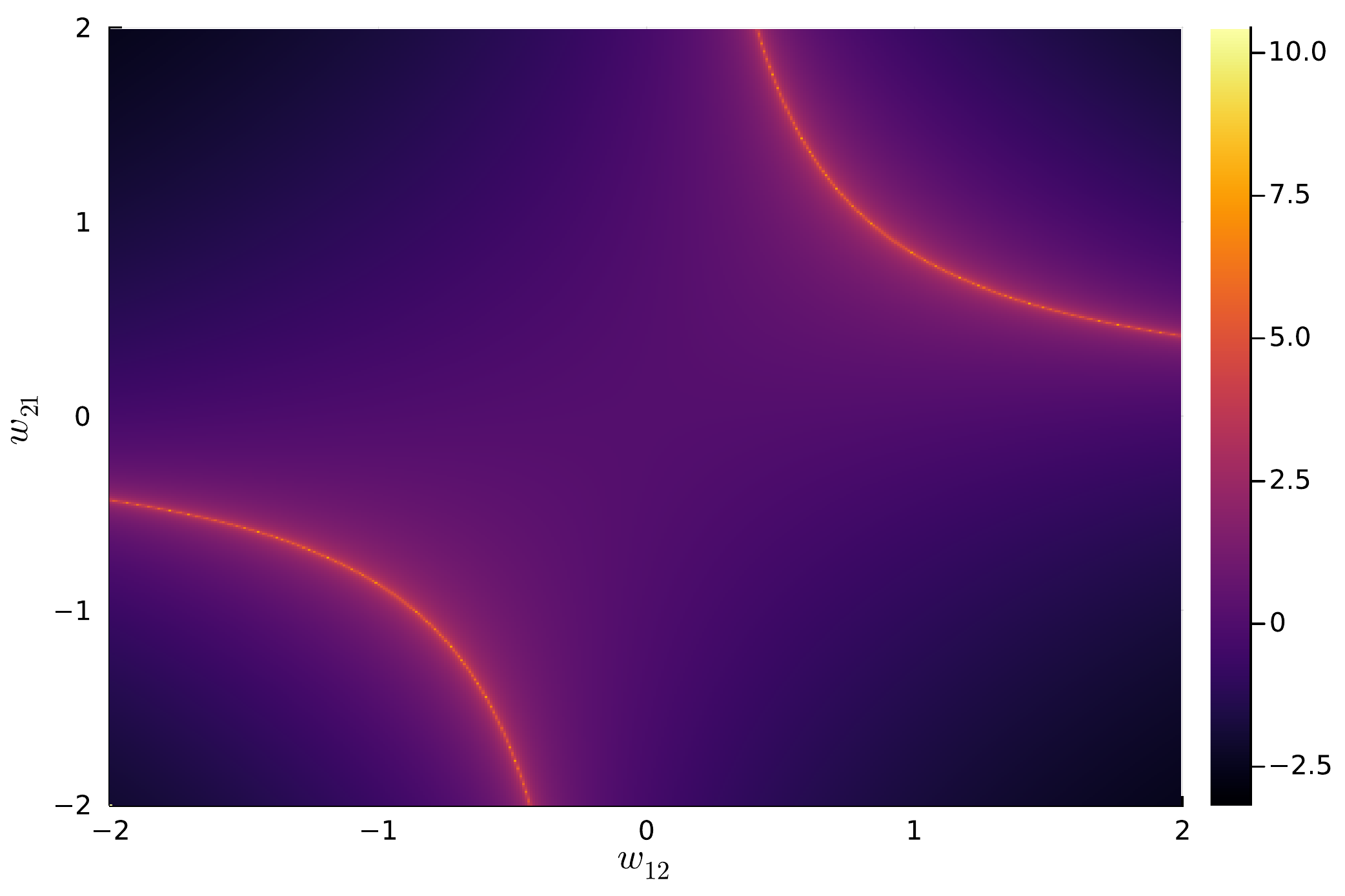}  
  \caption{}
  
\end{subfigure}
\hspace{0.02\textwidth}
\begin{subfigure}{.48\textwidth}
  \centering
  \includegraphics[width=1.0\linewidth]{./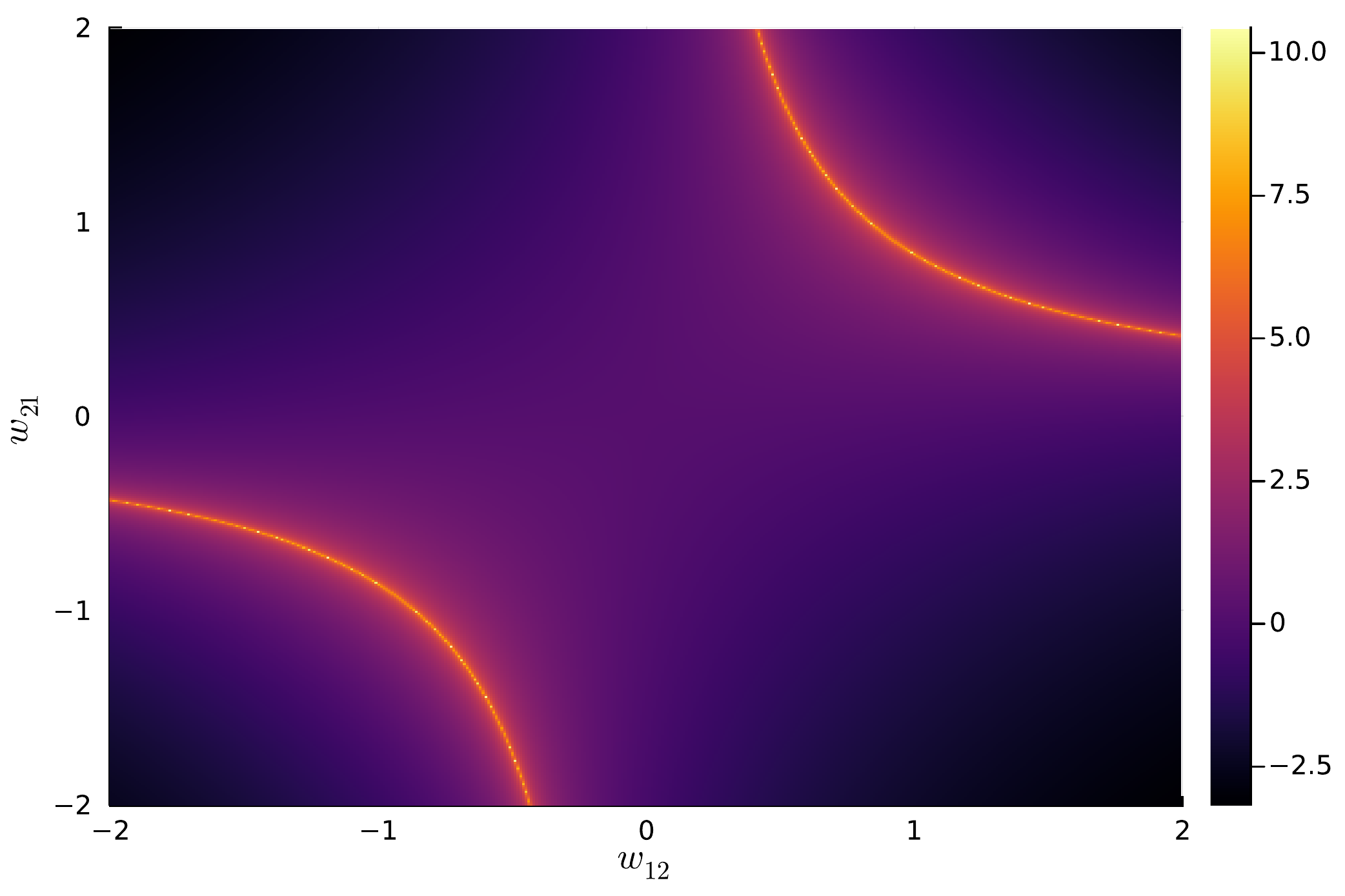}
  \caption{}
  
\end{subfigure}
\caption{Heatmap of the logarithmic volume on the plane of global minimizers, for $N=5$ and $N=\infty$, respectively. Note that the volume itself does not depend on the loss function, only the plane where the section was taken.}
\label{fig:logvolumeforms}
\end{figure}

\begin{figure}[h!]
    \centering
    \includegraphics[scale=0.6]{./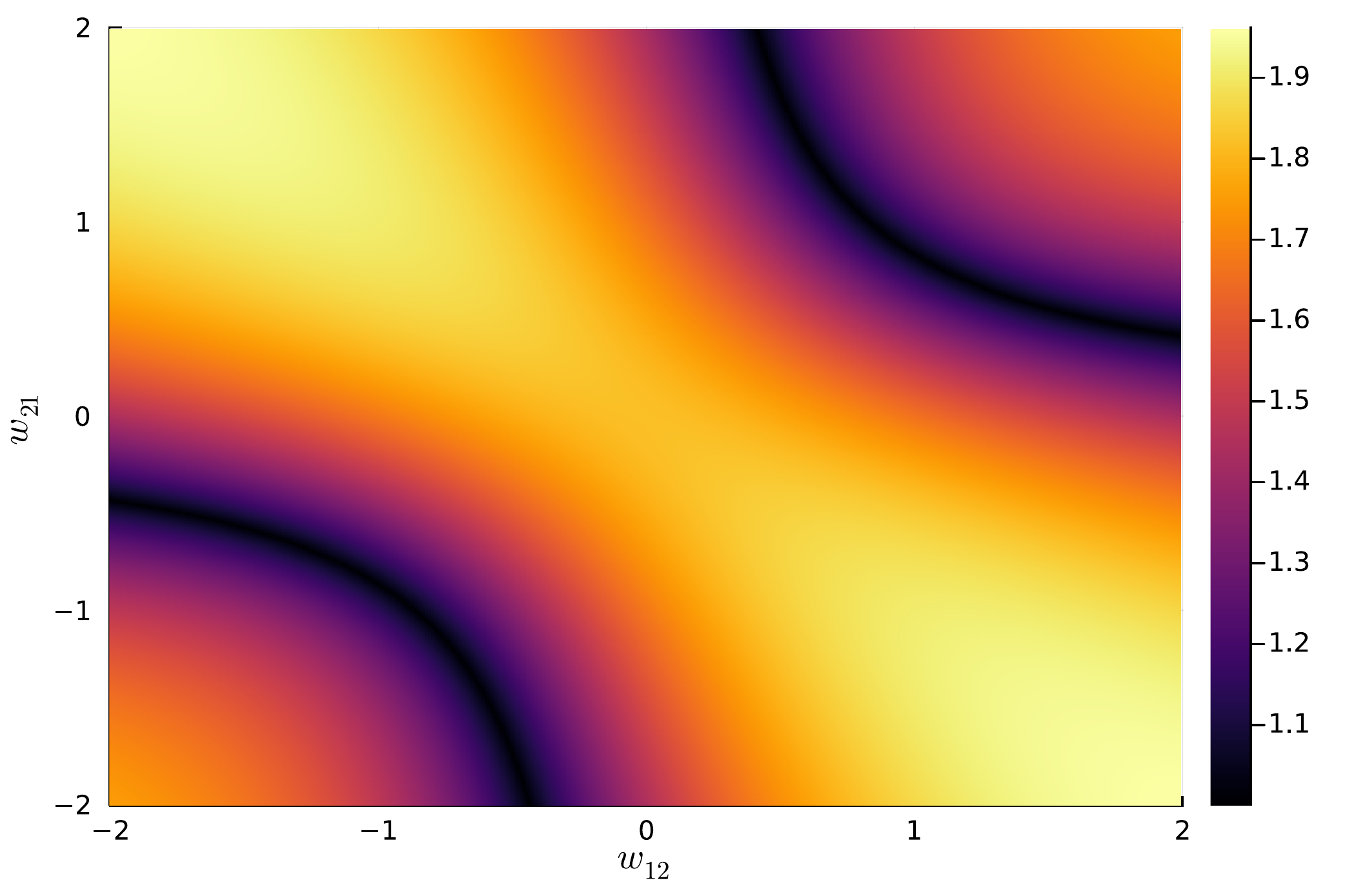}
    \caption{Effective rank of global minimizers.}
    \label{fig: effectiverank}
\end{figure}

Based on the example in section \ref{sec:wideexample} and the results in Figure \ref{fig:diagonalmatrixcompletion5Inf} we conclude that the infinite depth limit shows very similar behavior qualitatively to finite, sufficiently large depth models. Thus it is worth studying and a predictive theory of implicit regularization must be consistent with the infinite depth model.

Comparing Figure \ref{fig:diagonalmatrixcompletion5Inf} and Figure \ref{fig: effectiverank}, we see that there is no clustering of simulation outcomes in high effective rank regions. However, effective rank does not take the effect of depth into consideration. As seen in Figure \ref{fig:logvolumeforms}, state space volume shows higher concentration at higher depth, just as simulation outcomes in higher depth show more concentration in these regions. While we are not ready to make this relation quantitative, these results suggest that state space volume is a good candidate to rely on for a quantitative theory of implicit regularization.

We also see that the accumulation of training outcomes is strong near the corners of the hyperbola of rank one completions. If high volume is a predictive quantity for training outcomes, we expect a subtle decrease of volume along the hyperbola starting from the corners. We characterize the blowup rates along the hyperbola by computing the normal perturbation of singular values and volume density along the hyperbola. 

To simplify calculations, we fix the diagonal elements to 1, getting the one-parameter family of rank-one completions
    \begin{equation}
        W=\begin{bmatrix}
        1 & \gamma \\ \frac{1}{\gamma} & 1
        \end{bmatrix}.
    \end{equation}

    To simplify some of the formulas, we assume $\gamma>0$, restricting the analysis on the positive lobe of the hyperbola. The singular value decomposition $W(\gamma)=U(\gamma)\Sigma(\gamma) V^T(\gamma)$ can be explicitly  computed:
    \begin{equation}
        U=\frac{1}{\sqrt{1+\gamma^2}}\begin{bmatrix}
            \gamma & -1 \\ 1 & \gamma
        \end{bmatrix},\,\,
        \Sigma=\begin{bmatrix}
            \gamma+\gamma^{-1} & 0 \\0 &0
        \end{bmatrix},\,\,
        V=\frac{1}{\sqrt{1+\gamma^2}}\begin{bmatrix}
            1 & \gamma \\ \gamma & -1
        \end{bmatrix}.
    \end{equation}
    
    We will consider perturbations of size $\eta$ in the normal direction of the hyperbola $1/\gamma$,
    \begin{equation}
        \dot W= \frac{\eta}{\sqrt{1+\gamma^{-4}}} \begin{bmatrix}
            0 & \gamma^{-2} \\ 1 & 0
        \end{bmatrix}.
    \end{equation}

    Using Lemma \ref{smooth_svd}, the perturbed singular values can be computed,
    \begin{equation}
        \Sigma \left(\gamma, \eta\right)= \begin{bmatrix}
            \gamma+\gamma^{-1}+\frac{2\eta}{\left(1+\gamma^2\right) \sqrt{1+\gamma^{-4}}} & 0 \\ 0 & \eta\frac{ \sqrt{\gamma^4+1}}{\gamma^2+1}
        \end{bmatrix}+ \mathcal{O} \left(\eta^2 \right).
    \end{equation}

    And consequently, up to leading order, the determinant is
    \begin{equation}
        \det \left(\Sigma\left(\gamma,\eta\right)\right) = \eta \frac{\sqrt{\gamma^4+1}}{\gamma}+ \mathcal{O}(\eta^2).
    \end{equation}

    Plugging this back into the volume density function from equation \eqref{eq:volumedW},
    \begin{equation}
    \begin{split}
        \frac{\mathrm{van}\left(\log \Sigma^2\right)}{\det \left(\Sigma\right) \mathrm{van} \left(\Sigma^2\right)}&=\frac{2\gamma \left(\log \left( \gamma +\gamma^{-1}+\frac{2\eta}{\left(\gamma^2+1\right)\sqrt{1+\gamma^{-4}}}\right)-\log \left(\eta \frac{\sqrt{\gamma^4+1}}{\gamma^2+1}\right)\right)}{\left(\eta \sqrt{\gamma^4+1}+\mathcal{O}\left(\eta^2\right)\right) \left(\left(\gamma+\gamma^{-1}+\frac{2\eta}{\left(1+\gamma^2\right) \sqrt{1+\gamma^{-4}}}\right)^2-\left(\eta\frac{ \sqrt{\gamma^4+1}}{\gamma^2+1}\right)^2\right)}\\&= \mathcal{O} \left(\frac{|\log \eta|}{\eta}\right)
    \end{split}    
    \label{eq:volumescaling2by2}
    \end{equation}
    as $\eta \rightarrow 0$. We gained quantitative understanding of how the singular values perturb. Specifically, we learn that the leading order coefficient of the smaller singular value, $\sigma_2$, is $\frac{ \sqrt{\gamma^4+1}}{\gamma^2+1}$. This function has a minimum at $\gamma=1$, the corner of the hyperbola. This shows s quantitative, but not qualitative decrease of volume along the hyperbolas. The quickest blowup of the volume density is at the corners of the hyperbola.

\subsection{Other configurations}

\subsubsection{Example: Single rank deficient minimizer}
\label{sec:finiterankdefminimizers}

In the previous examples, both global minimizers and rank deficient global minimizers were nonunique. Here we provide an example, where minimizers are nonunique, but there is only one rank deficient (and high state space volume) minimizer. The setup is the following: the energy function is $E_{\mathcal{T}}(W)$, 
\begin{equation}
    \mathcal{T}=\begin{bmatrix}
        1 & 1 \\ 0 & 1
    \end{bmatrix},
\end{equation}
and $N=\infty$. In this case, the minimizers are of the form
\begin{equation}
    \begin{bmatrix}
        \Phi_{11} & \Phi_{12} \\ w_{21} & \Phi_{11}
    \end{bmatrix},
\end{equation}
for arbitrary $w_{21}$, while imposing rank deficiency gives one solution, $w_{21}=\Phi_{11} \Phi_{22}/ \Phi_{12}$. All simulation outcomes of 1000 random initial conditions drawn from $\text{Wigner}(0,0.001)$ showed convergence to this minimizer. As illustration, 5 sample trajectories are shown in Figure \ref{fig:uniqerankdeficientminimizer}.

\begin{figure}[h!]
    \centering
    \includegraphics[scale=0.6]{./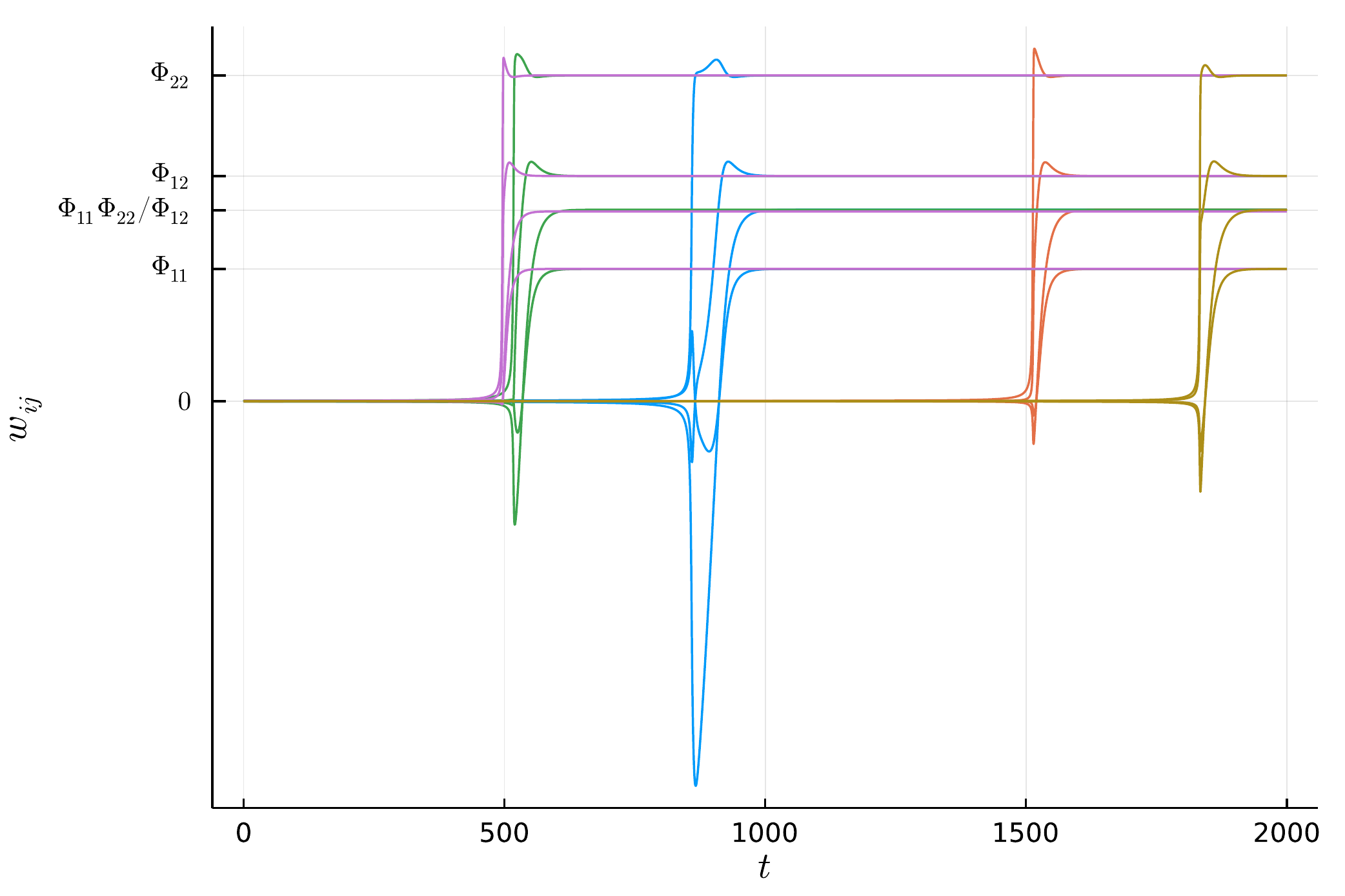}
    \caption{Five sample trajectories of the matrix elements under training are shown. All of them converging to the rank deficient minimizer.}
    \label{fig:uniqerankdeficientminimizer}
\end{figure}

\subsubsection{A 3-by-3 example}
\label{sec:3by3example}

We continue with a more complicated, $N=\infty$, $d=3$ example. Computing the minimal rank to which a partially known matrix can be completed is in general nontrivial. For a detailed discussion of the computational complexity of matrix completion, see \cite{Hardt2014ComputationalLF}. We covered cases before, where either the small matrix size, or the symmetry of the configuration in the observed elements (diagonal matrix completion) makes this problem trivial.

The equations for the missing elements can be written out by (symbolic) LU factorization under the assumption of a given rank. If the equations have a solution for the unknown elements, the matrix has a completion to the given rank. Solving the resulting system of multivariate polynomials may not be possible.

We show this process for an easily solvable case, take for example the first step of LU factorization for the following configuration of $\Phi_{ij} \neq 0$ and arbitrary $w_{ij}$ we are trying to solve for:

\begin{equation}
\begin{split}
    M(w_{31},w_{12},w_{23})&=\begin{bmatrix}
        \Phi_{11}& w_{12}& \Phi_{13} \\
        \Phi_{21}& \Phi_{22}& w_{23}\\
        w_{31}& \Phi_{32}& \Phi_{33} 
    \end{bmatrix}
    \\&=
    \begin{bmatrix}
        \Phi_{11} \\
        \Phi_{21}\\
        w_{31} 
    \end{bmatrix}
    \begin{bmatrix}
        1& \frac{w_{12}}{\Phi_{11}}& \frac{\Phi_{13}}{\Phi_{11}} \\
    \end{bmatrix}
    +
    \begin{bmatrix}
        0& 0& 0 \\
        0& \Phi_{22}-\frac{\Phi_{21} w_{12}}{\Phi_{11}}& w_{23}-\frac{\Phi_{13}\Phi_{21}}{\Phi_{11}}\\
        0& \Phi_{32}-\frac{w_{31} w_{12}}{\Phi_{11}}& \Phi_{33} - \frac{w_{31} \Phi_{13}}{\Phi_{11}}
    \end{bmatrix}
\end{split}
    \label{eq:discreterankdeficient}
\end{equation}
We see that there is no solution for the rank one completion and that the rank-two completions are implicitly given by 
\begin{equation}
0=\Phi_{33}- \frac{\Phi_{13}}{\Phi_{11}} w_{31} - \frac{\left(\Phi_{32}-\frac{w_{31}w_{12}}{\Phi_{11}}\right) \left(w_{23}-\frac{\Phi_{13}\Phi_{21}}{\Phi_{11}}\right)}{\Phi_{22}-\frac{\Phi_{21} w_{12}}{\Phi_{11}}}.
\label{eq:ranktwocompletions}
\end{equation}

In particular, an easily identifiable rank-two completion is given by the choice
\begin{equation}
    w^1_{12}=\frac{\Phi_{32} \Phi_{13}}{\Phi_{33}},\, w^1_{23}=\frac{\Phi_{13}\Phi_{21}}{\Phi_{11}},\, w^1_{31} =\frac{\Phi_{33}\Phi_{11}}{\Phi_{13}}.
    \label{eq:thepoint}
\end{equation}
Let $M$ denotee the zero energy matrix of with coordinates \eqref{eq:thepoint}.

In order to run numerical simulations, we pick arbitrary matrix elements for the target matrix:
\begin{equation}
\Phi=
    \begin{bmatrix}
        -1.55795& \cdot& 1.58397 \\0.212869& 0.0337805& \cdot\\\cdot &1.32488 &1.92653
    \end{bmatrix}.
\end{equation}

\begin{figure}[h!]
    \centering
    \includegraphics[width=0.45\textwidth]{./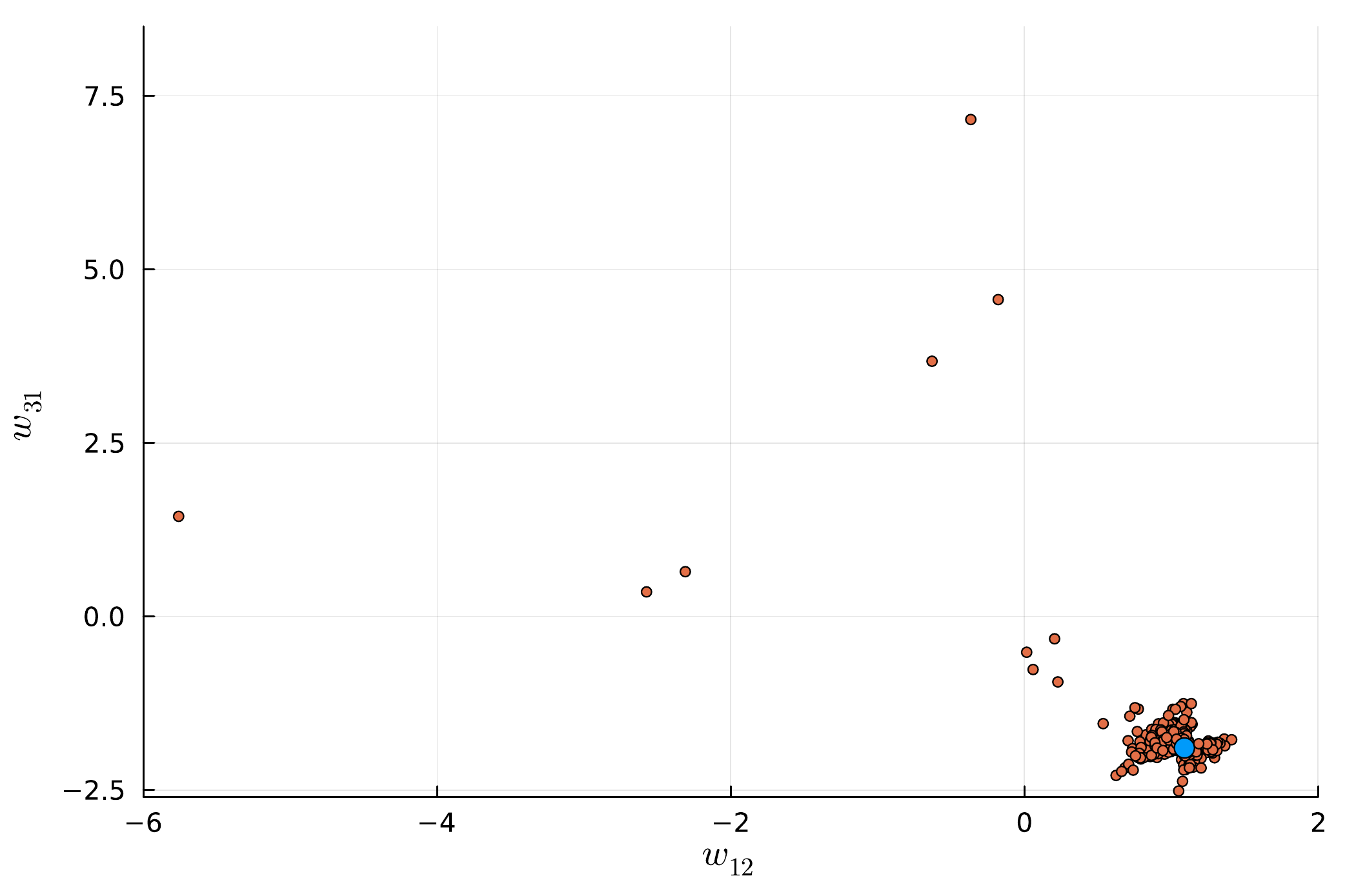}
    \hspace{0.05\textwidth}
    \includegraphics[width=0.45\textwidth]{./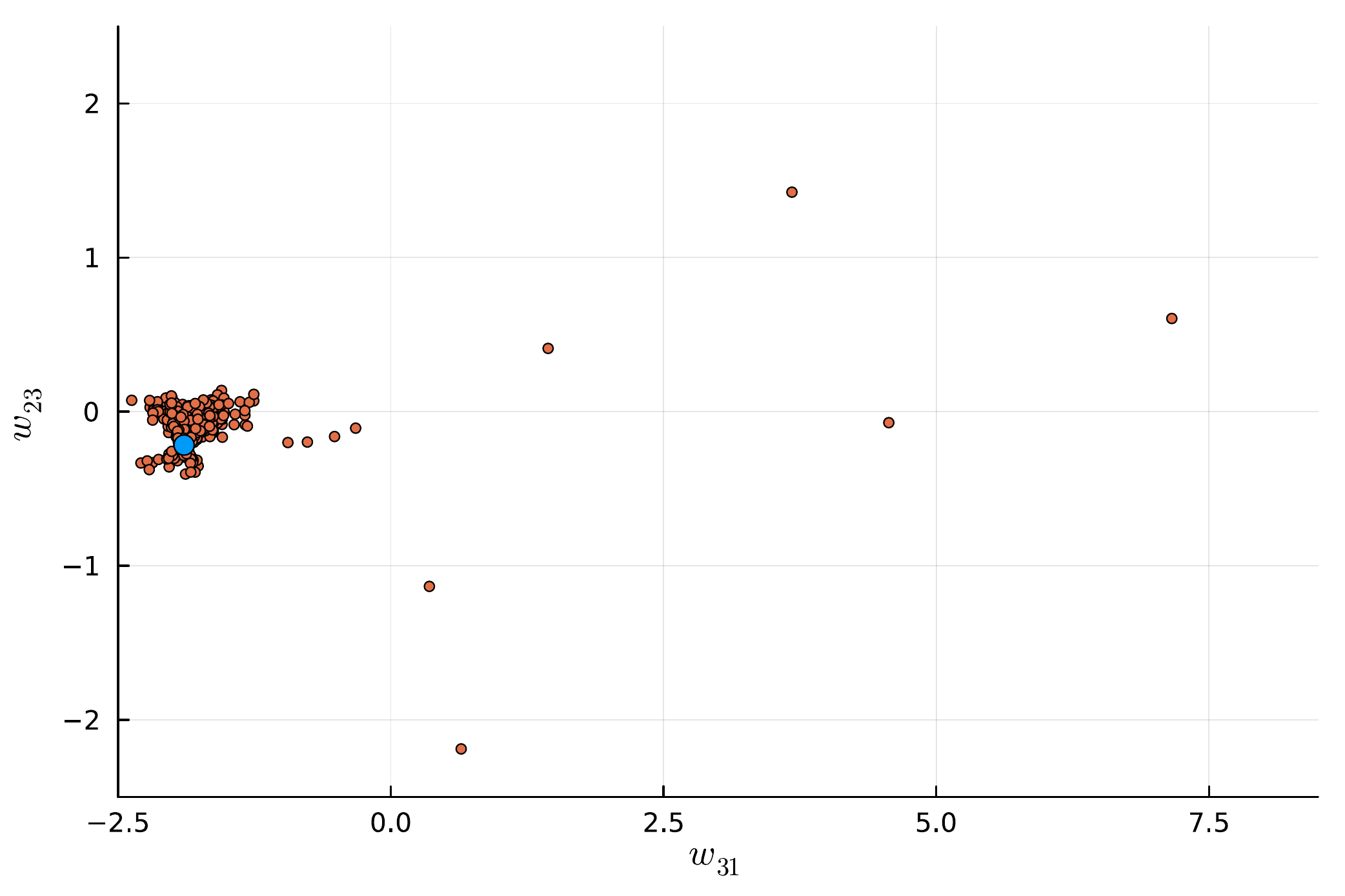}
    \caption{Clustering of training outcomes obtained in the setup of equation \eqref{eq:discreterankdeficient}. The majority of the 500 outputs cluster near one particular rank-two minimizer indicated by the blue dot. Random initial conditions were drawn from $\text{Wigner}(0,0.02)$.}
    \label{fig:3by3example}
\end{figure}

Figure \ref{fig:3by3example} shows the outcome of strong clustering in a region near $M$. We interpret this as a shortcoming of the rank based predictions, as the entire two parameter family of minimizers defined by \eqref{eq:ranktwocompletions} has rank-two. We attempt to explain the observation using phase space volume. We approximate phase space volume within the zero energy plane spanned by coordinates $(w_{12},w_{31}, w_{23})$ upon simple Monte Carlo integration. We integrate the volume form \eqref{eq:volumeforminf} in a small cube around some rank-two minimizers. 

Since these points are singular, volume form \eqref{eq:volumeforminf} blows up. As a result of this, the volume in a domain containing these points can be infinity. The volume, however, is locally finite, so working with domains bounded away from singular points allows us to make quantitative comparison between regions of the state space. Given a rank-two minimizer, we can study the concentration of volume around it by simple monte carlo integration of the volume density \eqref{eq:volumedW}.

Define the set of three-by-three matrices with smallest singular value strictly larger than $h$,
\begin{equation}
    GL_3^h=\{X \in GL(3):\sigma_3\left(X\right)>h\},
\end{equation}
and the three dimensional $H$-cube around a point $X$
\begin{equation}
    \mathcal{C}_H(X)=\{W \in \mathbb{R}^{d\times d}:\|x_{31}-w_{31}\|_1<H/2,\|x_{12}-w_{12}\|_1<H/2,\|x_{23}-w_{23}\|_1<H/2\}.
\end{equation}

We can generate uniform random points in $GL_3^h \cap \mathcal{C}_H(X)$ by drawing from a uniform distribution on $\mathcal{C}_H(X)$ and discarding the point when its smallest singular value is smaller than $h$. 

To carry out these integrations, random rank-two minimizers with undetermined elements $(w_{12},w_{31}, w_{23})$ solving \eqref{eq:ranktwocompletions} are also required. We obtain these matrices by sampling $w_{12}$ and $w_{31}$ from a centered normal distribution of standard deviation 10. Parameters $H=0.001$ and $h=0.00001$ were chosen.

\begin{figure}[h!]
    \centering
    \includegraphics[width=0.65\textwidth]{./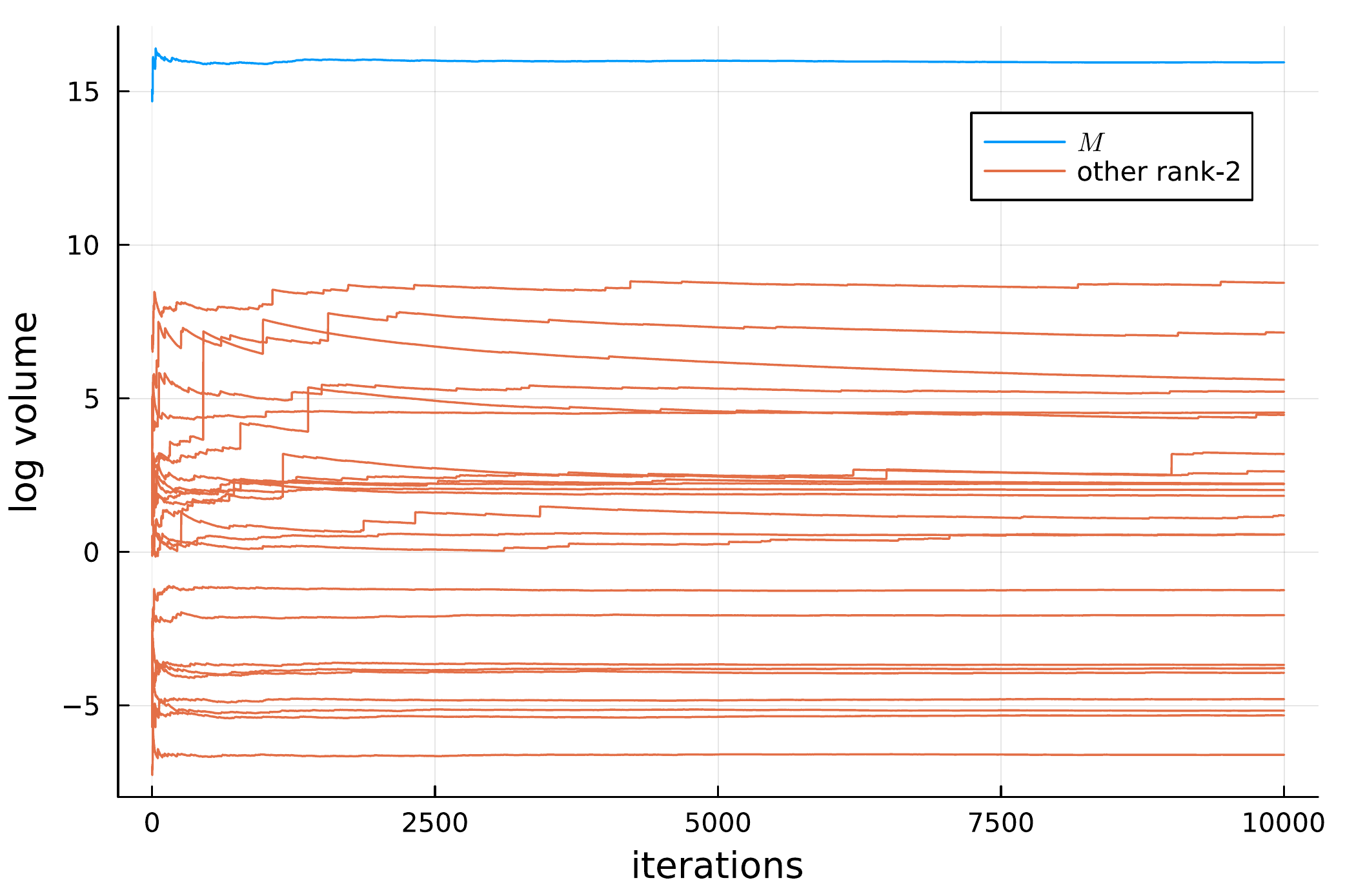}
    \caption{Monte Carlo integration of volume around rank-two minimizers.}
    \label{fig:montecarlo}
\end{figure}

The outcome of simulations conducted on $M$ and 24 other randomly selected rank-two minimizers is shown in Figure \ref{fig:montecarlo}. $M$ is the point of the highest volume vicinity, the difference to other equal rank minimizers spanning several orders of magnitude. This is in alignment with the clustering of training outcomes. 

This example motivates the rigorous analysis of the asymptotics of the volume form around singular matrices. While we observe large quantitative differences of volume in the cluster of training outcomes, notice that $M$ was a point arbitrarily selected in this cluster. Based on these results, we cannot conclude that there are no points with even larger volume around them, or even points with higher blowup rates. Our results nonetheless strongly suggest that state space volume is the right tool to make predictions on training outcomes, when rank based predictions are inconclusive.

\subsubsection{Example: Minimal Rank State Space}
\label{sec:rankoneexample}

Our last example is a pathological case showing an explicit shortcoming of the rank-hypothesis of implicit regularization. Let $\mathcal{R}^1=\{X \in \mathbb{M}_{2}: \text{rank}\left(X\right)=1\}$. It is shown in Chapter 1 in \cite{Bah2019} that the DLN geometry for $N<\infty$ can be defined on the manifold of fixed rank matrices using equation \eqref{metric} for the metric.

It is easy to show that the volume form of this metric is 
\begin{equation}
    \sqrt{G^N_1(W)} = \frac{N}{\sigma^{3\frac{N-1}{N}}},
    \label{jokevolumeform}
\end{equation}
where $\sigma$ denotes the single nonvanishing singular value of $W \in \mathcal{R}^1$. This volume form does not blow up at any point other than points near the origin. 

For numerical simulations, take the setup of section \ref{hyperbolas}, save for setting $N=20$ and the change in the generation of random initial conditions. We generate samples $u \sigma v^T=W'_0\sim \text{Wigner}(0,0.001)$, then use
\begin{equation}
    W_0=u \begin{bmatrix}
        \sigma_{1} &0 \\ 0& 0
    \end{bmatrix}
    v^T
\end{equation}
as initial conditions. The set of minimizers (same as minimum rank minimizers) consists of matrices 

    \begin{equation}
    \begin{bmatrix}
        \Phi_1 & w_{12} \\ \frac{\Phi_{2} \Phi_{1}}{w_{12}} & \Phi_2
    \end{bmatrix}
    =
    \begin{bmatrix}
        0.58724 & w_{12} \\ \frac{0.8497}{w_{12}} & 1.447
    \end{bmatrix}.
\end{equation}
Volume form \eqref{jokevolumeform} shows that highest volume is obtained at smallest $\sigma$, but notice that in this case the volume form does not blow up. Nonetheless, we can predict highest concentration of training outcomes at the minimal singular value completions. Simple computation verifies that these are the same as the symmetric completions,
\begin{equation}
    \begin{bmatrix}
        0.58724 & \pm0.921811 \\ \pm0.921811 & 1.447
    \end{bmatrix},
\end{equation}
which both admit $\sigma_{\min}=2.03424$. Thus, based on phase space volume, matrices  with singular value at $\sigma_{\min}$ and slightly above are expected to have high representation among training outcomes. Figure \ref{fig:rank1distro} shows the outcome of 1072 simulations using random initial conditions verifying the predictions. 

Notice furthermore that $r_e(W)=1$ for all $W\in \mathcal{R}^1$, therefore it is impossible to make any predictions based on effective rank.

\begin{figure}[H]
    \centering
    \includegraphics[scale=0.6]{./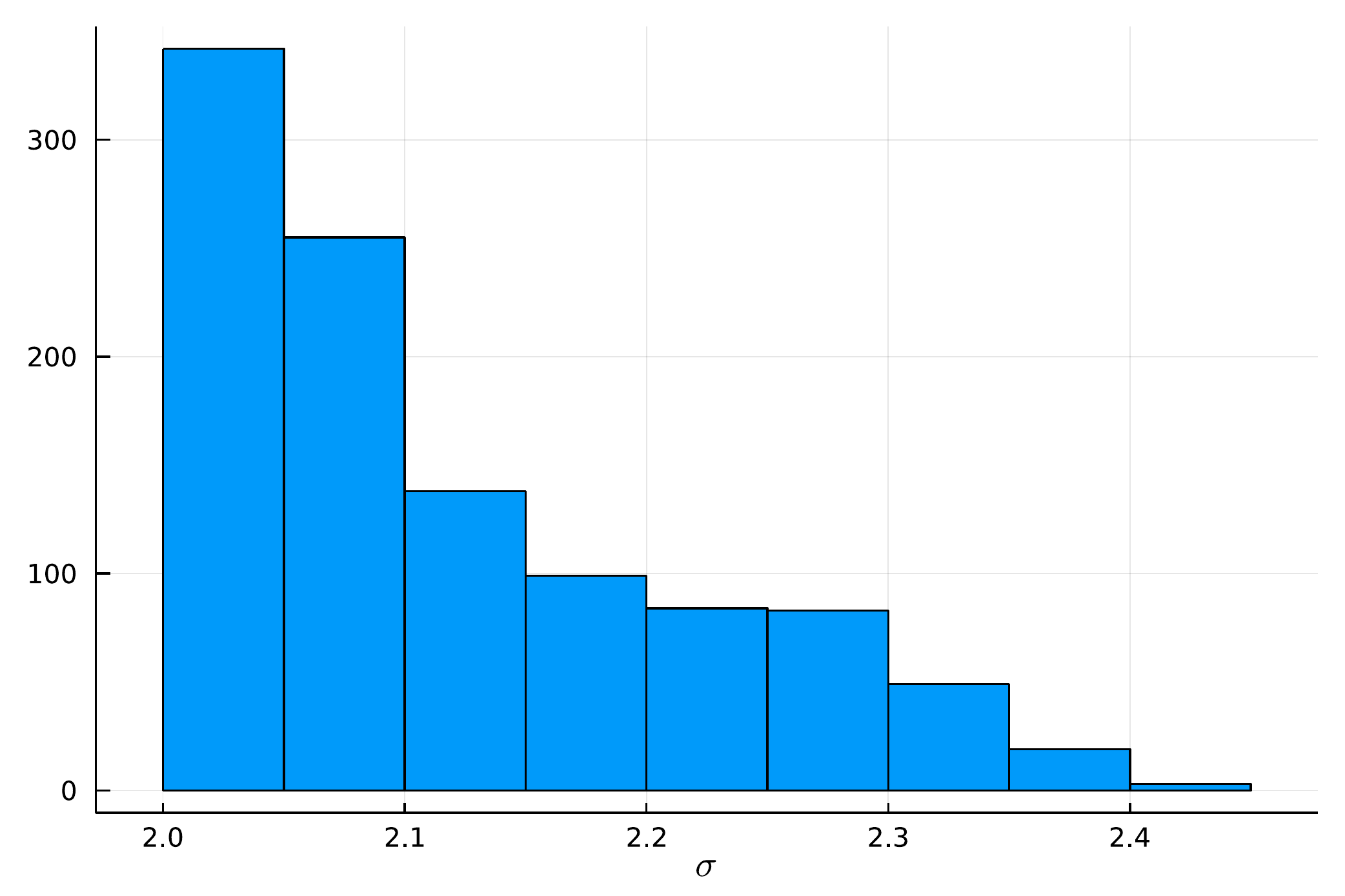}
    \caption{Sample distribution of 1072 simulation outcomes of rank one, diagonal matrix completion.}
    \label{fig:rank1distro}
\end{figure}

\section{Conclusion}
\label{sec:conclusion}
We have used Riemannian geometry to improve the rank-based understanding of implicit regularization in DLN. We derived new representations for the DLN metric and formalized the infinite depth limit. The most salient feature of our analysis is the derivation for the volume forms under depths of all positive integers, as well as infinity.

Using our results on the geometry of the DLN, we also improved upon the understanding of training dynamics. We found formulas for linear attraction rates using the eigendecomposition of the DLN metric. We then proved local normal hyperbolicity for the critical manifold of training dynamics under a relatively general family of loss functions.

Looking forward, we would like to continue our study on the DLN geometry. The simplicity of the formulas presented in this paper suggest that further intrinsic quantities could be expressed using explicit formulas. The derivation of higher order geometric quantities could give rise to new results on the dynamics. The precise formulation of stochastic training dynamics requires the notion of Brownian motion on a Riemannian manifold. The stochastic differential equation describing intrinsic Brownian motion relies on computations of the Levi-Civita connection and curvatures \cite[Chapter~3]{Hsu2002}.

We engineered low dimensional examples to verify our idea, that implicit regularization is explained by high state space volume. We showed that training in the case of $N=\infty$ shows implicit regularization the same way as has been established for $N<\infty$. To summarize, the DLN at $N=\infty$ provides a novel model of implicit regularization which is simpler than the $N<\infty$ counterpart.

\section*{Acknowledgments}
The work of GM and ZV was supported in part by NSF grant  DMS-2107205. The work of NC was supported in part by the Israel Science Foundation (grant 1780/21) and the Tel Aviv University Center for AI and Data Science (TAD).

\bibliographystyle{siamplain}
\bibliography{references}

\end{document}